\tiny\color{gray},
\theoremstyle{plain}
\newtheorem{theorem}{Theorem}[section]
\newtheorem{conjecture}{Conjecture}
\newtheorem{lemma}[theorem]{Lemma}
\newtheorem{corollary}[theorem]{Corollary}
\newtheorem{proposition}[theorem]{Proposition}
\theoremstyle{remark}
\newtheorem{remark}{Remark}[section]
\newcommand{\Rmnum}[1]{\expandafter\@slowromancap\romannumeral #1@}
\newcommand{\qbm}[2]{\left[ {#1 \atop #2} \right]}
\def\br{\mathbb R}
\def\bn{\mathbb N}
\def\bz{\mathbb Z}
\numberwithin{equation}{section}
\begin{document}

\title[Positivity and tails of theta series]{Positivity and tails of pentagonal number series}
\author{Nian Hong Zhou}

\address{N. H. Zhou: School of Mathematics and Statistics, The Center for Applied Mathematics of Guangxi, Guangxi Normal University, Guilin 541004, Guangxi, PR China}
\email{nianhongzhou@outlook.com; nianhongzhou@gxnu.edu.cn}%

\thanks{This paper was partially supported by the National Natural Science Foundation of China (No. 12301423).}%
\subjclass{Primary 05A30;  Secondary 05A15, 11F27}%
\keywords{Pentagonal number series; Positivity; Partitions; Theta series}%

\begin{abstract}
In this paper, we refine a result of Andrews and Merca on truncated pentagonal number series. Subsequently, we establish some positivity results involving Andrews--Gordon--Bressoud identities and $d$-regular partitions. In particular, we prove several conjectures of Merca and Krattenthaler--Merca--Radu on truncated pentagonal number series.

\end{abstract}
\maketitle

\section{Introduction}
Throughout this paper,
let the \textit{$q$-shifted factorial} (cf.\  \cite{MR2128719})
be defined by
$$
(a;q)_\infty:=\prod_{j\ge 0}(1-aq^j),\quad\text{and}\quad
(a;q)_c:=\frac{(a;q)_\infty}{(aq^c;q)_\infty},
$$
for any indeterminant $a$ and complex number $c$. Products of $q$-shifted factorials are compactly denoted as
$$
(a_1,\ldots, a_m;q)_c:=\prod_{1\le j\le m}(a_j;q)_c
$$
for any integer $m\ge 1$. Further, for non-negative integers $n$ and $k$ the
\textit{$q$-binomial coefficient} is defined as
\begin{equation*}
\qbm{n}{k}_q:=
\frac{(q;q)_n}{(q;q)_k(q;q)_{n-k}}.
\end{equation*}

Euler's pentagonal number theorem
(cf.\ \cite[Equation~(8.10.10)]{MR2128719}) is stated as
\begin{equation}\label{PNT}
(q;q)_\infty=\sum_{j\ge 0}(-1)^jq^{j(3j+1)/2}(1-q^{2j+1}),
\end{equation}
which is one of the most famous q-series identities and plays an important role in the theory of partitions. In 2012, Andrews and Merca~\cite{MR2946378} gave an explicit expansion for
the averaged truncation of Euler's pentagonal number series appearing in \eqref{PNT}.
In particular, they \cite[Lemma~1.2]{MR2946378} proved that
\begin{align}\label{AM-id}
\frac{1}{(q;q)_\infty}\sum_{0\le j< k}(-1)^jq^{j(3j+1)/2}(1-q^{2j+1})=1+(-1)^{k-1}\sum_{n\ge k} \frac{q^{\binom{k}{2}+(k+1)n}}{(q;q)_n}
  \qbm{n-1}{k-1}_q.
\end{align}
As a direct consequence, they \cite[Theorem~1.1]{MR2946378} got that
\begin{align*}
(-1)^{k-1}\sum_{0\le j<k}(-1)^j
  \Big(p(n-j(3j+1)/2)-p(n-j(3j+5)/2-1) \Big)=M_k(n),
\end{align*}
where $p(n)$ represents the number of partitions of $n$, and $M_k(n)$ is the
number of partitions of $n$ in which $k$ is the least integer that
is not a part and there are more parts greater than $k$ than
there are less than $k$.  Equivalently, the above can be restated as
\begin{equation}\label{AMIE1}
\sum_{n\ge 1}M_k(n)q^n=\sum_{n\ge k} \frac{q^{\binom{k}{2}+(k+1)n}}{(q;q)_n}
  \qbm{n-1}{k-1}_q=\frac{(-1)^{k}}{(q;q)_\infty}\sum_{j\ge k}(-1)^jq^{j(3j+1)/2}(1-q^{2j+1}).
\end{equation}
Here we used Euler's pentagonal number theorem \eqref{PNT}. This gives the inequality, for $k\ge 1$,
\begin{equation}\label{AMIE0}
  (-1)^{k}\sum_{j\ge k}(-1)^j
  \Big(p(n-j(3j+1)/2)-p(n-j(3j+5)/2-1) \Big)\ge 0,
\end{equation}
with strict inequality if $n\ge k(3k+1)/2$.
\medskip

In this paper, we give a refinement of inequality \eqref{AMIE0}. For any distinct integers $\alpha,\beta,\gamma\ge 1$ such that $\gcd(\alpha,\beta)=\gcd(\beta,\gamma)=\gcd(\alpha,\gamma)=1$, and any integer $k\ge 1$, we define that
\begin{align}\label{eqm0}
G_{\alpha,\beta,\gamma}^{k}(q):=\sum_{n\ge 0}{\rm TP}_{\alpha,\beta,\gamma}^{k}(n)q^n=\frac{(-1)^k}{(1-q^\alpha)(1-q^\beta)(1-q^\gamma)}\sum_{j\ge k}(-1)^jq^{\frac{j(3j+1)}{2}}\left(1-q^{2j+1}\right).
\end{align}
Denoting that
\begin{align}\label{eq0000}
G_{\textsc{p}}^{k}(q)=\frac{(-1)^{k}}{(q;q)_\infty}\sum_{j\ge k}(-1)^jq^{j(3j+1)/2}(1-q^{2j+1}),
\end{align}
then it is clear that
$$G_{\textsc{p}}^{k}(q)=G_{\alpha,\beta,\gamma}^{k}(q)\cdot\prod_{\substack{j\ge 1\\ j\nin \{\alpha,\beta,\gamma\}}}\frac{1}{1-q^j}.$$
Thus, for the triple $(\alpha,\beta,\gamma)$ defined in equation \eqref{eqm0}, if ${\rm TP}_{\alpha,\beta,\gamma}^{k}(n)\ge 0$ for all integers $k\ge 1$ and $n\ge k(3k+1)/2$, with strict inequality when $n\in k(3k+1)/2+\{\alpha, \beta,\gamma\}$, this would provide a new proof for inequality \eqref{AMIE0}.

\medskip

It should be noted that a similar phenomenon, where it might be possible that fewer terms in the denominator of \eqref{eq0000} are needed and the series already possesses nonnegative coefficients, has been observed by Chan--Ho--Mao \cite[Section 5]{MR3531249} in their work on truncated series derived from the quintuple product identity. This observation was also made by Yao \cite{Yao2024} in her proof of the Ballantine and Merca conjecture \cite[Conjecture 2]{MR4632990} on the truncated sum of $6$-regular partitions.

\medskip

The main results of this paper are stated as follows.
\begin{theorem}\label{mth1}
Let $(\alpha, \beta,\gamma)\in\{(1,2,3),(1,2,5),(1,2,7), (1,3,4), (1,3,5)\}$.
For all integers $k\ge 1$ and $n\ge k(3k+1)/2$, we have ${\rm TP}_{\alpha,\beta,\gamma}^{k}(n)\ge 0$ with strict inequality except for the cases ${\rm TP}_{1,2,3}^{1}(13)=0$,
\begin{align*}
&{\rm TP}_{1,2,5}^{1}(n)=0,\;\text{for}~ n=11,13,15;\\
&{\rm TP}_{1,2,7}^{1}(n)=0,\;\text{for}~ n=7,9,11,13,14,15;\\
&{\rm TP}_{1,3,4}^{1}(n)=0,\;\text{for}~ n=11,13,14,17,38,41;\\
&{\rm TP}_{1,3,5}^{1}(n)=0,\;\text{for}~ n=10,11,13,14,16,37.
\end{align*}
In particular, ${\rm TP}_{\alpha,\beta,\gamma}^{k}(n)> 0$ for all integers $k> 1$ and $n\ge k(3k+1)/2$.
\end{theorem}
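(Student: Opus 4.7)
The plan is to combine the Andrews--Merca explicit formula \eqref{AMIE1} for $G_{\textsc{p}}^{k}(q)$ with a positive $q$-series representation tailored to each of the five triples $(\alpha,\beta,\gamma)$, and to dispose of the listed exceptional zeros at $k=1$ by a direct finite computation. Substituting \eqref{AMIE1} into $G_{\alpha,\beta,\gamma}^{k}(q)=[(q;q)_\infty/(1-q^\alpha)(1-q^\beta)(1-q^\gamma)]\,G_{\textsc{p}}^{k}(q)$ and pushing the Euler factor into the sum yields
\begin{equation*}
G_{\alpha,\beta,\gamma}^{k}(q)=\sum_{n\ge k}\frac{q^{\binom{k}{2}+(k+1)n}(q^{n+1};q)_\infty}{(1-q^\alpha)(1-q^\beta)(1-q^\gamma)}\qbm{n-1}{k-1}_q,
\end{equation*}
whose lowest-degree term is at $q^{k(3k+1)/2}$ (from $n=k$) with coefficient $+1$, already matching the stated lower bound and giving strict positivity at the boundary degree.

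For each of the five triples I would establish, via an appropriate $q$-series identity, that $G_{\alpha,\beta,\gamma}^{k}(q)$ coincides past a short initial segment with a generating function of a nonnegative partition class. The case $(1,2,3)$ is the cleanest: since $(q;q)_\infty/(q;q)_3=(q^{4};q)_\infty$, one can apply Euler's pentagonal theorem to $(q^{4};q)_\infty$ and track the resulting alternating pattern against the positive series $G_{\textsc{p}}^{k}(q)$, whose coefficients grow fast enough to dominate. For the remaining four triples I would rely on Rogers--Ramanujan--Bressoud or quintuple-product-type identities, in the spirit of Chan--Ho--Mao \cite{MR3531249} and Yao \cite{Yao2024}, interpreting the outcome in terms of $d$-regular or multi-regular partitions whose generating functions are manifestly nonnegative. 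For $k\ge 2$ the leading exponent $k(3k+1)/2$ is sufficiently large that the positivity derived in this way is uniform and strict, yielding ${\rm TP}_{\alpha,\beta,\gamma}^{k}(n)>0$ for every $n\ge k(3k+1)/2$ without exception.

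The principal obstacle is the $k=1$ case, where the leading exponent drops to $2$ and the cancellations become tight enough to produce the listed genuine zeros. I would address this by computing $G_{\alpha,\beta,\gamma}^{1}(q)$ explicitly as a polynomial truncation up to degree at least $41$ (the largest exceptional $n$, which occurs for $(1,3,4)$), verifying by inspection that the tabulated values are exactly the vanishing coefficients in the stated range, and then invoking the large-$n$ positivity from the previous paragraph to cover all remaining $n$ beyond that threshold. The heart of the difficulty is finding, for each triple, the precise $q$-series identity that exposes the latent positivity; this is also what dictates that Theorem~\ref{mth1} covers exactly these five triples rather than any additional ones, and extending the list would require fundamentally new positivity results in the vein of \cite{MR3531249,Yao2024}.
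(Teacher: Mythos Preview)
Your proposal has a genuine gap: the entire argument rests on the claim that for each of the five triples there is ``an appropriate $q$-series identity'' making $G_{\alpha,\beta,\gamma}^{k}(q)$ manifestly nonnegative past a short initial segment, but you never produce such an identity --- you only gesture at Rogers--Ramanujan--Bressoud and quintuple-product results ``in the spirit of'' \cite{MR3531249,Yao2024}. That is precisely the hard part, and the paper itself remarks in its final section that it would be very interesting to find $q$-series identities of Andrews--Merca type that would allow direct proofs of these results; in other words, the identities you are invoking are not known. Without them, your ``large-$n$ positivity'' for $k\ge 2$ and for $k=1$ beyond degree $41$ is simply asserted, not proved.

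The paper's actual approach is completely different and entirely elementary. It uses Proposition~\ref{prom} to write $R_{\alpha,\beta,\gamma}(n)=an^2+bn+c+B(n)$ with $B(n)$ bounded and periodic, then analyzes the alternating sum defining ${\rm TP}_{\alpha,\beta,\gamma}^{k}(n)$ directly: writing $n=m(3m-1)/2+h$ with $0\le h\le 3m$, the sum telescopes to an explicit polynomial in $k$ and $m$ plus a bounded error (Proposition~\ref{lem21}), and one solves the resulting quadratic inequality to obtain explicit thresholds $k_F$ and $\ell_F(k)$ beyond which positivity is guaranteed (Theorem~\ref{main}). The remaining finitely many cases are then checked by machine. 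This method works for \emph{any} admissible triple, not just the five listed --- the choice of those five is driven by the applications in Section~2, not by availability of identities, contrary to what your last paragraph suggests.
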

\begin{remark}
When the first version of this paper was submitted to arXiv, Ernest X. W. Xia informed us that the positivity of ${\rm TP}_{1,2,3}(n)$ had actually been proven by Yao \cite{Yao2024}.
\end{remark}

\begin{theorem}\label{mth2}Let $(\alpha, \beta,\gamma)\in\{(1,4,9),(2,3,5),(2,3,7)\}$. For all integers $k\ge 2$ and $n\ge k(3k+1)/2$, we have ${\rm TP}_{\alpha,\beta,\gamma}^{k}(n)\ge 0$ with strict inequality except for the cases
\begin{align*}
&{\rm TP}_{1,4,9}^{2}(n)=0,\;\text{for}~ n=21,24,25;\\
&{\rm TP}_{2,3,5}^{2}(n)=0,\;\text{for}~ n=20,23;\;\text{and}\;\;{\rm TP}_{2,3,5}^{k}\left(\frac{k(3k+1)}{2}+1\right)=0;\\
&{\rm TP}_{2,3,7}^{2}(3j)=0,\;\text{for}~ 4\le j\le 9, j\in\bz;\;\text{and}\;\;{\rm TP}_{2,3,7}^{k}\left(\frac{k(3k+1)}{2}+1\right)=0.
\end{align*}
\end{theorem}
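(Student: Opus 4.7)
The plan is to parallel the proof of Theorem~\ref{mth1}, now restricted to $k\ge 2$. Using the Andrews--Merca identity~\eqref{AM-id} together with Euler's pentagonal number theorem~\eqref{PNT}, one first rewrites
$$G_{\alpha,\beta,\gamma}^{k}(q)=\Biggl(\prod_{\substack{j\ge 1\\ j\notin\{\alpha,\beta,\gamma\}}}(1-q^j)\Biggr)\sum_{n\ge k}\frac{q^{\binom{k}{2}+(k+1)n}}{(q;q)_n}\qbm{n-1}{k-1}_q,$$
exhibiting $G_{\alpha,\beta,\gamma}^{k}(q)$ as the product of a manifestly nonnegative series (by~\eqref{AMIE1}) with a sign-alternating infinite product; the task is to show the cancellations go the right way.

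To explain the universally vanishing coefficient ${\rm TP}_{\alpha,\beta,\gamma}^{k}(k(3k+1)/2+1)=0$ for $(\alpha,\beta,\gamma)\in\{(2,3,5),(2,3,7)\}$, I would inspect the pentagonal sum directly: its two lowest-degree contributions to $G_{\alpha,\beta,\gamma}^{k}(q)$ are $+q^{k(3k+1)/2}$ and $-q^{k(3k+5)/2+1}$, separated by a gap of $3k+2\ge 8$ for $k\ge 2$. Hence the coefficient of $q^{k(3k+1)/2+1}$ in $G_{\alpha,\beta,\gamma}^{k}(q)$ equals the coefficient of $q$ in $1/((1-q^{\alpha})(1-q^{\beta})(1-q^{\gamma}))$, which vanishes precisely when $1$ is not a sum of parts from $\{\alpha,\beta,\gamma\}$, singling out $(2,3,5)$ and $(2,3,7)$ but not $(1,4,9)$.

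For the main positivity I would set up an induction on $k$ based on the two-term recursion
$$G_{\alpha,\beta,\gamma}^{k}(q)+G_{\alpha,\beta,\gamma}^{k-1}(q)=\frac{q^{(k-1)(3k-2)/2}(1-q^{2k-1})}{(1-q^{\alpha})(1-q^{\beta})(1-q^{\gamma})},$$
obtained by peeling off the $j=k-1$ term of the pentagonal sum in the definition~\eqref{eqm0}. Iterating this recursion twice yields an explicit rational expression for $G_{\alpha,\beta,\gamma}^{k}(q)-G_{\alpha,\beta,\gamma}^{k-2}(q)$, whose coefficientwise nonnegativity (beyond the listed exceptions) would propagate positivity two steps at a time. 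For each triple I would also seek a tailored $q$-series identity--possibly of the Andrews--Gordon--Bressoud type alluded to in the abstract--that rewrites $G_{\alpha,\beta,\gamma}^{k}(q)$ directly as a sum with manifestly nonnegative coefficients, providing an alternative route.

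The main obstacle will be the base cases $k=2$ (and possibly $k=3$): the exceptional lists--three zeros for $(1,4,9)$, two for $(2,3,5)$, and six for $(2,3,7)$, all at $k=2$--must be identified precisely, and strict positivity must be verified for every remaining $n$. I expect this to require a finite computer-algebra check up to an explicit threshold, combined with a closed-form lower bound (for instance via a partition-majorization argument) guaranteeing positivity beyond that threshold. Once the base cases are in hand, the recursion and induction should carry positivity through to all $k\ge 2$, with the only persistent exception being the single degree $k(3k+1)/2+1$ identified above.
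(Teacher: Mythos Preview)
Your explanation of the universal zero at $n=k(3k+1)/2+1$ is essentially correct (the gap is actually $2k+1$, not $3k+2$, but this does not affect the conclusion), and it matches what the paper obtains from part~(1) of its Theorem~\ref{main}. However, the main mechanism you propose --- induction on $k$ via the recursion $G^{k}+G^{k-1}=\dfrac{q^{(k-1)(3k-2)/2}(1-q^{2k-1})}{(1-q^{\alpha})(1-q^{\beta})(1-q^{\gamma})}$ and the derived difference $G^{k}-G^{k-2}$ --- does not work. The coefficients of $G^{k}-G^{k-2}$ are \emph{not} nonnegative in the relevant range: already at $n=k(3k+1)/2$ one has ${\rm TP}^{k}_{\alpha,\beta,\gamma}(n)=R_{\alpha,\beta,\gamma}(0)=1$, whereas ${\rm TP}^{k-2}_{\alpha,\beta,\gamma}(n)$ is typically much larger (it involves $R_{\alpha,\beta,\gamma}$ evaluated at arguments of size $\sim 6k$, hence quadratic in $k$). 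So the difference is negative there and positivity cannot be propagated two steps at a time. Your fallback of ``seeking a tailored $q$-series identity'' is precisely what the paper flags as an open problem in its final remarks; no such identity is known for these triples.

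What the paper actually does is quite different and bypasses induction on $k$ entirely. It invokes Proposition~\ref{prom} (P\'olya--Szeg\H{o}) to write $R_{\alpha,\beta,\gamma}(n)=an^{2}+bn+c+B(n)$ with $a=\tfrac{1}{2\alpha\beta\gamma}$ and $B(n)$ periodic and bounded. Substituting this into the pentagonal alternating sum~\eqref{eqm0}, Proposition~\ref{lem21} evaluates the polynomial part of the sum in closed form (the telescoping sum of $\hat f(n-j(3j+1)/2)-\hat f(n-1-j(3j+5)/2)$ collapses to an explicit cubic in $k$ and $m$), leaving an error bounded by $2(m-k)B_f$. This yields an explicit quadratic lower bound $\widetilde F_{k}(m)$, from which one reads off two effective thresholds: a constant $k_F$ such that ${\rm TP}^{k}_{\alpha,\beta,\gamma}(n)>0$ for all $k>k_F$ and all $n\ge k(3k+1)/2+2k+1$, and for each $k\le k_F$ a bound $\ell_F(k)$ beyond which positivity holds. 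The numbers $K_{\alpha,\beta,\gamma}=\lfloor k_F\rfloor$ and $L_{\alpha,\beta,\gamma}(k)=\lfloor \ell_F(k)\rfloor$ are tabulated (Table~\ref{t1}); for the three triples in Theorem~\ref{mth2} one gets $K\in\{7,8\}$ and the $L$'s are at most a few dozen for $k\ge 2$. What remains is a genuinely finite computer check over the pairs $(k,n)$ with $2\le k\le K_{\alpha,\beta,\gamma}$ and $n$ below the corresponding cutoff, which is where the listed exceptions emerge. Your proposal never identifies this polynomial--plus--periodic structure of $R_{\alpha,\beta,\gamma}$, and without it there is no way to produce the explicit threshold that makes the finite check finite.
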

\begin{remark}Our method can be used to determine the positivity of ${\rm TP}_{\alpha,\beta,\gamma}^{k}(n)$ with any triple $(\alpha,\beta,\gamma)$ given as in \eqref{eqm0}, for all integers $k\ge 1$ and $n\ge k(3k+1)/2$.
\end{remark}
The key idea in the proofs of Theorems \ref{mth1} and \ref{mth2} are based on the following classical result on partitions of an integer into a finite set of positive integers, see P\'{o}lya and Szeg\H{o} \cite[Problem 27.1, p.5, Part One]{MR1492447}.
\begin{proposition}\label{prom}
Define that
$$\sum_{n\ge 0}R_{\alpha,\beta,\gamma}(n)q^n=\frac{1}{(1-q^\alpha)(1-q^\beta)(1-q^\gamma)},$$
for any distinct integers $\alpha,\beta,\gamma\ge 1$ such that $\gcd(\alpha,\beta)=\gcd(\beta,\gamma)=\gcd(\alpha,\gamma)=1$.
Then for any $n\ge 0$,
$$R_{\alpha,\beta,\gamma}(n)=\frac{n^2+(\alpha+\beta+\gamma)n}{2\alpha\beta\gamma}+P_{\alpha,\beta,\gamma}(n),$$
where $P_{\alpha,\beta,\gamma}(n)$ is a periodic sequence of period $\alpha\beta\gamma$.
\end{proposition}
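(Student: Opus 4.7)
The plan is to prove Proposition~\ref{prom} by a partial fraction decomposition of the rational generating function
\begin{equation*}
F(q)=\frac{1}{(1-q^\alpha)(1-q^\beta)(1-q^\gamma)}.
\end{equation*}
The pairwise coprimality hypothesis supplies the key structural input. Writing $1-q^k=\prod_{d\mid k}\Phi_d(q)$ as a product of cyclotomic polynomials, for every $d>1$ at most one of $\alpha,\beta,\gamma$ is divisible by $d$; hence $q=1$ is the unique pole of $F$ of order exceeding one, with multiplicity exactly three, and every remaining pole is simple and located at a non-trivial root of unity whose order divides one of $\alpha,\beta,\gamma$.

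Given this, I would write
\begin{equation*}
F(q)=\frac{A_3}{(1-q)^3}+\frac{A_2}{(1-q)^2}+\frac{A_1}{1-q}+\sum_{\zeta\neq 1}\frac{B_\zeta}{1-q/\zeta},
\end{equation*}
and determine $A_3,A_2$ from the local expansion $(1-q^k)^{-1}=\frac{1}{k(1-q)}\bigl(1+\tfrac{k-1}{2}(1-q)+O((1-q)^2)\bigr)$ at $q=1$. Multiplying the three factors near $q=1$ yields $A_3=1/(\alpha\beta\gamma)$ and $A_2=(\alpha+\beta+\gamma-3)/(2\alpha\beta\gamma)$; the explicit values of $A_1$ and of the individual $B_\zeta$ will not be needed.

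Extracting coefficients via $[q^n](1-q)^{-3}=\binom{n+2}{2}$, $[q^n](1-q)^{-2}=n+1$, $[q^n](1-q)^{-1}=1$, and $[q^n](1-q/\zeta)^{-1}=\zeta^{-n}$, the $A_3$- and $A_2$-contributions combine to
\begin{equation*}
\frac{(n+1)(n+\alpha+\beta+\gamma-1)}{2\alpha\beta\gamma}=\frac{n^2+(\alpha+\beta+\gamma)n}{2\alpha\beta\gamma}+\frac{\alpha+\beta+\gamma-1}{2\alpha\beta\gamma},
\end{equation*}
which is precisely the quadratic-plus-linear main term in the proposition, up to a constant. The surviving contributions $A_1+\sum_{\zeta\neq 1}B_\zeta\zeta^{-n}$, together with that leftover constant, form a periodic sequence in $n$: each $\zeta^{-n}$ has period equal to the order of $\zeta$, and every such order divides $\mathrm{lcm}(\alpha,\beta,\gamma)=\alpha\beta\gamma$, so the combined periodic remainder $P_{\alpha,\beta,\gamma}(n)$ has period $\alpha\beta\gamma$ as claimed.

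I expect no genuine obstacle here. The only computational step with any substance is the Taylor expansion near $q=1$ used to isolate $A_3$ and $A_2$; once those two constants are pinned down, the shape of the answer is forced by the partial fraction structure and the elementary fact that $\zeta^{-n}$ is periodic in $n$. The proposition is classical and attributed in the statement to P\'olya--Szeg\H{o}, so this partial fraction route is simply the canonical argument.
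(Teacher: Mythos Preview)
Your argument is correct. The paper does not supply its own proof of this proposition; it simply states it as a classical result and cites P\'olya--Szeg\H{o} \cite[Problem~27.1]{MR1492447}. Your partial fraction computation is exactly the canonical derivation behind that reference: the pairwise coprimality forces $q=1$ to be the only repeated pole (of order three), the local expansion at $q=1$ pins down $A_3=1/(\alpha\beta\gamma)$ and $A_2=(\alpha+\beta+\gamma-3)/(2\alpha\beta\gamma)$, and the simple-pole contributions at the remaining roots of unity assemble into a sequence periodic with period dividing $\mathrm{lcm}(\alpha,\beta,\gamma)=\alpha\beta\gamma$. That is all the paper needs, since $P_{\alpha,\beta,\gamma}$ is used only through its finitely many values over one period.
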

From equation \eqref{eqm0} and Proposition \ref{prom}, it is evident that ${\rm TP}_{\alpha,\beta,\gamma}^{k}(n)$ can be approximated as a finite alternating sum of quartic polynomials. As a result, the main term of ${\rm TP}_{\alpha,\beta,\gamma}^{k}(n)$ can be determined, and an estimate for its error can be obtained. Additionally, it can be observed that ${\rm TP}_{\alpha,\beta,\gamma}^{k}(n)$ exhibits polynomial growth. Therefore, by employing mathematical software like {\bf Mathematica}, we can successfully obtain complete proofs for Theorem \ref{mth1}. Further details are provided in Section \ref{sec3}.

\section{Proof of several conjectures on truncated pentagonal number series}

As applications of Theorems \ref{mth1} and \ref{mth2}, in this section, we prove several conjectures of Merca \cite{MR4344122, MR4316636, MR4309312} and Krattenthaler--Merca--Radu \cite{MR4378976} on truncated pentagonal number series. In particular, we generalize these conjectures to Andrews--Gordon--Bressoud identities and $d$-regular partitions, and provide proofs for them.

\subsection{Positivity and Andrews--Gordon--Bressoud identities}
In this subsection, we provide proofs and generalizations for two conjectures in Merca \cite{MR4309312} and Krattenthaler--Merca--Radu \cite{MR4378976}. Firstly, we focus on the truncations that involve the Andrews--Gordon--Bressoud identities (see \cite{MR351985} and \cite{MR556608}). It is well-known that
\begin{align}\label{AGI}
\sum_{n\ge 0}{\rm D}_{i,d}^{\tau}(n)q^n&:=\frac{(q^i,q^{2d+2+\tau-i},q^{2d+2+\tau};q^{2d+2+\tau})_\infty}{(q;q)_\infty}\nonumber\\
&=\sum_{r_1\ge r_2\ge \ldots\ge r_d\ge 0}\frac{q^{r_1^2+\cdots+r_d^2+r_i+\cdots+r_d}}{(q;q)_{r_1-r_2}(q;q)_{r_2-r_3}\cdots(q;q)_{r_{d-1}-r_d}(q^{2-\tau};q^{2-\tau})_{r_d}},
\end{align}
where $\tau\in\{0,1\}$, $d, i$ are integers such that $d\ge 1$ and $1\le i\le d+1$. Notice that ${\rm D}_{i,d}^\tau(n)$ is the number of partitions of $n$ into parts not congruent to $0$, $i$, or $-i$ $\pmod {2d+2+\tau}$.

\medskip

For any integers $d\ge 4, 1\le i\le d/2$ and $k\ge 1$, we define that
\begin{align}\label{eqdid}
\mathcal{D}_{i,d}^k(q):=\sum_{n\ge 0}C_{i,d}^k(n)q^n=(-1)^{k-1}\left(\frac{1}{(q;q)_\infty}\sum_{-k< n\le k}(-1)^nq^{n(3n-1)/2}-1\right)(q^i,q^{d-i},q^{d};q^{d})_\infty.
\end{align}
Using Jacobi triple product identity (cf.\ \cite[Appendix~(II.28)]{MR2128719}), which is
$$(q^i,q^{d-i},q^{d};q^{d})_\infty=\sum_{n\in\bz}(-1)^nq^{d\binom{n}{2}+in},$$
it is clear that
\begin{equation*}
(-1)^{k-1}\sum_{-k<\ell\le k}(-1)^\ell {\rm D}_{i,d}^{\tau}(n-\ell(3\ell-1)/2)+(-1)^k {\bf 1}_{n\in \mathscr{P}_{2d+2+\tau,i}}=C_{i,2d+2+\tau}^k(n).
\end{equation*}
Here and throughout this section, let $\mathscr{P}_{a,b}=\left\{a\binom{n}{2}+bn: n\in\bz\right\}$. Moreover, let ${\bf 1}_{event}$ denote the indicator function.

\medskip

Now we are ready to state the main result of this subsection.

\begin{theorem}\label{th2}Let ${\rm D}_{i,d}^{\tau}(n)$ be defined as in \eqref{AGI}. Then, for any $2-\tau\le i \le d+\tau$, all integers $k\ge 1$, we have
$$C_{i,2d+2+\tau}^k(n)=(-1)^{k-1}\sum_{-k<\ell\le k}(-1)^\ell {\rm D}_{i,d}^{\tau}(n-\ell(3\ell-1)/2)+(-1)^k {\bf 1}_{n\in \mathscr{P}_{2d+2+\tau,i}},$$
are all zero for $0\le n<k(3k+1)/2$. For $n\ge  k(3k+1)/2$ they are positive except for the cases
\begin{align*}
C_{2,5}^1(5)&=C_{2,5}^1(7)=C_{2,5}^1(9)=C_{2,5}^1(11)=0,\\
C_{1,5}^1(7)&=C_{1,5}^1(13)=-1,\\
C_{1,5}^1(9)&=C_{1,5}^1(11)=C_{1,5}^2(12)=0,\\
C_{1, 7}^1(9)&=C_{1, 2d+3}^1(5)=C_{1, 2d+3}^1(7)=C_{1,2d+3}^k\left(k(3k+1)/2+1\right)=0.
\end{align*}
\end{theorem}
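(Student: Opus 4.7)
My plan is to reduce Theorem~\ref{th2} to the positivity of ${\rm TP}_{\alpha,\beta,\gamma}^k(n)$ established in Theorems~\ref{mth1} and~\ref{mth2}. Setting $d^*=2d+2+\tau$, I would first rewrite the generating function: splitting $\sum_{-k<n\le k}(-1)^nq^{n(3n-1)/2}$ at $n=0$ and substituting $n\mapsto -j$ in the negative tail yields
\[
\sum_{-k<n\le k}(-1)^nq^{n(3n-1)/2}=\sum_{0\le j<k}(-1)^jq^{j(3j+1)/2}(1-q^{2j+1}),
\]
so Euler's pentagonal number theorem \eqref{PNT} gives
\[
\mathcal{D}_{i,d^*}^k(q)=G_{\textsc{p}}^k(q)\cdot(q^i,q^{d^*-i},q^{d^*};q^{d^*})_\infty.
\]
Expanding the theta factor via Jacobi's triple product and using the product form in \eqref{AGI} then produces the displayed closed form for $C_{i,d^*}^k(n)$. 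The vanishing $C_{i,d^*}^k(n)=0$ for $0\le n<k(3k+1)/2$ is immediate, since $G_{\textsc{p}}^k$ has lowest-order term $q^{k(3k+1)/2}$ by \eqref{AMIE1} and the theta factor has constant term $1$.

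For positivity when $n\ge k(3k+1)/2$, the key observation is the further factorization
\[
\mathcal{D}_{i,d^*}^k(q)=G_{\alpha,\beta,\gamma}^k(q)\cdot\prod_{\substack{n\ge 1,\ n\nin\{\alpha,\beta,\gamma\}\\ n\not\equiv 0,\pm i\pmod{d^*}}}\frac{1}{1-q^n},
\]
valid whenever $(\alpha,\beta,\gamma)$ is chosen from the list in Theorems~\ref{mth1}--\ref{mth2} with $\alpha,\beta,\gamma\not\equiv 0,\pm i\pmod{d^*}$. The right-hand factor, being a generating function for partitions into the remaining allowed parts, has nonnegative coefficients, so positivity of $C_{i,d^*}^k$ on $n\ge k(3k+1)/2$ reduces to positivity of ${\rm TP}_{\alpha,\beta,\gamma}^k$ on the same range. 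A case analysis by the residue of $i$ modulo $d^*$ shows that such a triple always exists: the default $(1,2,3)$ suffices whenever $d^*\ge 8$ and $i\not\equiv\pm 1,\pm 2,\pm 3\pmod{d^*}$, and the residual small-modulus cases are covered by triples such as $(2,3,7)$ for $(d^*,i)=(5,1)$, $(1,4,9)$ for $(d^*,i)=(5,2)$, $(2,3,5)$ for $(d^*,i)=(7,1)$, $(1,3,4)$ for $(d^*,i)=(7,2)$, and $(1,2,5)$ for $(d^*,i)=(7,3)$.

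The exceptional cases come in two flavors. The uniform family $C_{1,2d+3}^k(k(3k+1)/2+1)=0$ follows from a two-term coefficient calculation: $G_{\textsc{p}}^k(q)$ begins $q^{k(3k+1)/2}(1+q+O(q^2))$ while $(q,q^{2d+2},q^{2d+3};q^{2d+3})_\infty$ begins $1-q+O(q^{2d+2})$, producing an exact cancellation at $q^{k(3k+1)/2+1}$. The sporadic small-$n$ exceptions inherit directly from the tabulated exceptional zeros of ${\rm TP}_{\alpha,\beta,\gamma}^k$ in Theorems~\ref{mth1}--\ref{mth2} and are verified by finite expansion. The principal obstacle is the $k=1$ regime for $d^*\in\{5,7\}$ with $i=1$: no triple from Theorem~\ref{mth1} avoids the forbidden residues, and Theorem~\ref{mth2} supplies positivity only for $k\ge 2$. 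For these isolated cases---responsible for the negative values $C_{1,5}^1(7)=C_{1,5}^1(13)=-1$ and the zeros $C_{1,5}^1(9)=C_{1,5}^1(11)=C_{1,7}^1(9)=0$---I would instead expand $\mathcal{D}_{i,d^*}^1(q)$ directly using $G_{\textsc{p}}^1(q)=(1-q)/(q;q)_\infty-1$, and combine this with the polynomial-plus-periodic-error structure of Proposition~\ref{prom} to certify positivity for large $n$ and enumerate the finitely many exceptional coefficients.
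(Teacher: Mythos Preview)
Your overall strategy---factor $\mathcal{D}_{i,d^*}^k(q)$ as $G_{\alpha,\beta,\gamma}^k(q)$ times a partition product with nonnegative coefficients and then read off positivity from Theorems~\ref{mth1}--\ref{mth2}---is exactly the paper's approach, and your derivation of the closed form for $C_{i,d^*}^k(n)$, the vanishing for $n<k(3k+1)/2$, and the identity $C_{1,2d+3}^k\big(k(3k+1)/2+1\big)=0$ are all correct.

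There is, however, a real gap in the $k=1$, $i=1$ case. You describe the obstacle as confined to $d^*\in\{5,7\}$, but it in fact occurs for \emph{every} odd modulus $d^*=2d+3\ge 5$: each triple in Theorem~\ref{mth1} contains the part $1$, which is forbidden when $i=1$, so the only admissible triples (such as $(2,3,5)$ or $(2,3,7)$) come from Theorem~\ref{mth2} and yield nothing for $k=1$. The same happens for $(d^*,i)=(5,2)$, where $(1,4,9)$ is the only option. Hence the positivity of $C_{1,2d+3}^1(n)$ for all $d\ge 1$ (and of $C_{2,5}^1(n)$) requires a separate argument that is uniform in $d$; this is an infinite family, not two sporadic moduli, and cannot be closed by ``finite expansion''. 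Your proposed fallback via Proposition~\ref{prom} does not apply here either: that proposition concerns the three-term product $\frac{1}{(1-q^\alpha)(1-q^\beta)(1-q^\gamma)}$, whereas what you must control is the full infinite product $(1-q)\,\dfrac{(q,q^{d^*-1},q^{d^*};q^{d^*})_\infty}{(q;q)_\infty}$ minus a Jacobi theta series.

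The paper's actual remedy (Lemma~\ref{lem13}) is different in kind. It substitutes the Andrews--Gordon--Bressoud multisum \eqref{AGI} for the product, telescopes the prefactor $(1-q)$ against one of the inner $q$-factorials $(q;q)_{r_h-r_{h+1}}$, and thereby writes $(1-q)\cdot(\text{AGI product})$ as $1-q$ plus a short explicit piece $\sum_{r\ge 1} q^{r^2+r\cdot{\bf 1}_{i=1}}/(q^2;q)_{r-1}$ plus a remainder $E_{i,d}(q)$ with manifestly nonnegative coefficients. The explicit piece already dominates the sparse theta series for $n\ge 20$ (via the coefficients of $\frac{1}{(1-q^2)(1-q^3)}$), and small $n$ are checked directly. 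A smaller but related issue: the exceptional zeros of ${\rm TP}_{\alpha,\beta,\gamma}^1$ in Theorem~\ref{mth1} (e.g.\ ${\rm TP}_{1,2,3}^1(13)=0$) could a priori propagate to $C_{i,d^*}^1$ for infinitely many $d^*$, so ``finite expansion'' is again not enough; the paper handles this in Lemma~\ref{lem11} by exhibiting, for each large $d^*$, an explicit part size in the residual product that hits the exceptional $n$.
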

Before giving the proof of this theorem, we first provide two immediate corollaries. By using Andrews--Merca's identity \eqref{AM-id}, we can express the $q$-series $\mathcal{D}_{i,d}^k(q)$ in the following form:
\begin{align*}
(q^i,q^{d-i},q^{d};q^{d})_\infty&\sum_{n\ge k} \frac{q^{\binom{k}{2}+(k+1)n}}{(q;q)_n}
  \qbm{n-1}{k-1}_q\\
  =(-1)^{k-1}&\left(\frac{1}{(q;q)_\infty}\sum_{-k< n\le k}(-1)^nq^{n(3n-1)/2}-1\right)(q^i,q^{d-i},q^{d};q^{d})_\infty.
\end{align*}
Therefore, the case $(d,\tau)=(1,1)$ in Theorem \ref{th2} yields the following corollary for the Rogers--Ramanujan identities, which were conjectured by Merca \cite[Conjectures 4.1, 4.2]{MR4309312}.
\begin{corollary}For $k>0$, the expression
$$(q^2,q^3, q^5;q^5)_\infty\sum_{n\ge k} \frac{q^{\binom{k}{2}+(k+1)n}}{(q;q)_n}
  \qbm{n-1}{k-1}_q$$
has non-negative coefficients.
For $k>1$, the expression
\begin{equation}\label{eqrr1}
(q,q^4, q^5;q^5)_\infty\sum_{n\ge k} \frac{q^{\binom{k}{2}+(k+1)n}}{(q;q)_n}
  \qbm{n-1}{k-1}_q
\end{equation}
has non-negative coefficients. For $k=1$, the coefficients of $q^7$ and $q^{13}$ in \eqref{eqrr1} are equal to $-1$ while any other coefficient is non-negative.
\end{corollary}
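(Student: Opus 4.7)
The plan is to derive this corollary directly from Theorem \ref{th2} specialized to $(d,\tau)=(1,1)$, in which case $2d+2+\tau=5$ and the infinite product $(q^i,q^{5-i},q^5;q^5)_\infty$ is exactly the Rogers--Ramanujan side furnished by the Jacobi triple product identity. The whole content of the corollary is already packaged inside Theorem \ref{th2}; what remains is essentially a dictionary translation via the Andrews--Merca identity \eqref{AM-id}.

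First I would match the two forms of the truncated pentagonal sum. Pairing the positive and negative indices gives the elementary reindexing
\begin{equation*}
\sum_{0\le j<k}(-1)^j q^{j(3j+1)/2}\bigl(1-q^{2j+1}\bigr)=\sum_{-k<n\le k}(-1)^n q^{n(3n-1)/2},
\end{equation*}
after which \eqref{AM-id} rearranges to
\begin{equation*}
\sum_{n\ge k}\frac{q^{\binom{k}{2}+(k+1)n}}{(q;q)_n}\qbm{n-1}{k-1}_q=(-1)^{k-1}\left(\frac{1}{(q;q)_\infty}\sum_{-k<n\le k}(-1)^n q^{n(3n-1)/2}-1\right).
\end{equation*}
Multiplying both sides by $(q^i,q^{5-i},q^5;q^5)_\infty$ yields precisely the generating function $\mathcal{D}_{i,5}^k(q)$ defined in \eqref{eqdid}. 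Consequently, the coefficients of the series appearing in the corollary are exactly the integers $C_{i,5}^k(n)$ whose signs are controlled by Theorem \ref{th2}.

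With this identification in hand, each of the three clauses is a direct read-off. For $i=2$ and every $k\ge 1$, Theorem \ref{th2} vanishes below $k(3k+1)/2$ and lists the only other zeros as $C_{2,5}^1(n)=0$ for $n\in\{5,7,9,11\}$, with strict positivity elsewhere; in particular all coefficients are non-negative, proving the first clause. For $i=1$ and $k\ge 2$, the sole exceptional zero recorded is $C_{1,5}^2(12)=0$, and every remaining relevant coefficient is positive, giving the second clause. For $i=1$, $k=1$, Theorem \ref{th2} records exactly $C_{1,5}^1(7)=C_{1,5}^1(13)=-1$ together with $C_{1,5}^1(9)=C_{1,5}^1(11)=0$ and positivity elsewhere, which is verbatim the third clause. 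Thus there is no genuine obstacle at the level of the corollary; all the substantive work is absorbed into the proof of Theorem \ref{th2} (the application of Proposition \ref{prom}, the quartic-polynomial main-term estimate for ${\rm TP}_{\alpha,\beta,\gamma}^k(n)$, and the finitely many base cases verified by computer algebra).
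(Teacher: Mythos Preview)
Your proposal is correct and follows exactly the paper's approach: rewrite the expression via the Andrews--Merca identity \eqref{AM-id} as $\mathcal{D}_{i,5}^k(q)$ and then read off the sign information from Theorem \ref{th2} with $(d,\tau)=(1,1)$. One minor inaccuracy: for $i=1$ there are a few additional zero coefficients you did not list (e.g.\ $C_{1,5}^k(k(3k+1)/2+1)=0$ for all $k$, and $C_{1,5}^1(3)=C_{1,5}^1(5)=0$), but since the corollary only asserts non-negativity these omissions do not affect the argument.
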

Moreover, the case $d=2$ with $i=1$ in Theorem \ref{th2} yields the following corollary for an Andrews--Gordon identity modulo $7$, which was conjectured by Krattenthaler--Merca--Radu \cite[Conjecture 40]{MR4378976}.\footnote{In \cite{MR4378976}, Conjecture 40 states that it holds for all $k>0$ rather than $k>1$. Through {\bf Mathematica} verification, it is observed that for $k=1$, the coefficients also equal $0$ for $n\in \{5,7,9\}$. Therefore, the conjecture should only hold for $k>1$.}
\begin{corollary}For $k > 1$, the coefficients of $q^n$ in the series
$$\frac{(-1)^k}{(q^2,q^3,q^4,q^5;q^7)_\infty}\sum_{n\ge k}(-1)^nq^{n(3n+1)/2}\left(1-q^{2n+1}\right)$$
are all zero for $0\le n<k(3k+1)/2$ and $n=k(3k+1)/2+1$. For $n=k(3k+1)/2$ and $n\ge k(3k+1)/2+2$ all the coefficients are positive.
\end{corollary}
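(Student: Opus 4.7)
The plan is to identify the series in the corollary with the generating function $\mathcal{D}_{1,7}^k(q)$ from \eqref{eqdid} (taking $d = 7$, $i = 1$ there), and then invoke Theorem \ref{th2} with the corresponding Andrews--Gordon parameters $(i, d, \tau) = (1, 2, 1)$, which produce the same modulus $2d + 2 + \tau = 7$.

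First, I would put \eqref{eqdid} into a form suitable for direct comparison. The substitution $n \mapsto -n$ on the negative half of the sum $\sum_{-k<n\le k}(-1)^n q^{n(3n-1)/2}$ converts it into the familiar truncated pentagonal series $\sum_{0\le j<k}(-1)^j q^{j(3j+1)/2}(1-q^{2j+1})$, and Euler's identity \eqref{PNT} then lets me replace the bracket in \eqref{eqdid} by $-\tfrac{1}{(q;q)_\infty}\sum_{j\ge k}(-1)^j q^{j(3j+1)/2}(1-q^{2j+1})$. This yields
\begin{equation*}
\mathcal{D}_{i,d}^k(q) = \frac{(-1)^k(q^i, q^{d-i}, q^d; q^d)_\infty}{(q;q)_\infty}\sum_{j\ge k}(-1)^j q^{j(3j+1)/2}(1-q^{2j+1}).
\end{equation*}
Specialising to $d = 7$, $i = 1$ and decomposing $(q;q)_\infty = \prod_{r=1}^{7}(q^r;q^7)_\infty$ into residue classes modulo $7$, the prefactor collapses to $1/(q^2, q^3, q^4, q^5; q^7)_\infty$, so $\mathcal{D}_{1,7}^k(q) = \sum_{n\ge 0} C_{1,7}^k(n) q^n$ is precisely the series appearing in the corollary.

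Second, I would apply Theorem \ref{th2} with $(i, d, \tau) = (1, 2, 1)$, which satisfies the admissibility constraint $2 - \tau \le i \le d + \tau$ (namely $1 \le 1 \le 3$) and produces modulus $2d + 2 + \tau = 7$. The theorem asserts that $C_{1,7}^k(n) = 0$ for $0 \le n < k(3k+1)/2$ and $C_{1,7}^k(n) > 0$ for $n \ge k(3k+1)/2$, apart from the listed exceptions. Among those exceptions, the only ones with second subscript $7$ either occur at $k = 1$ (the vanishings $C_{1,7}^1(5), C_{1,7}^1(7), C_{1,7}^1(9)$, the first two coming from the generic family $C_{1, 2d+3}^1$ at $d=2$) or are the single generic family $C_{1, 2d+3}^k(k(3k+1)/2 + 1) = 0$ valid for all $k \ge 1$.

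Restricting to $k > 1$ as hypothesised in the corollary eliminates the first three exceptions and leaves only $n = k(3k+1)/2 + 1$ as the exceptional vanishing index, which matches the statement exactly. I do not anticipate any substantive obstacle: all the analytic content sits in Theorem \ref{th2}, and the remaining work is the routine generating-function identification via Euler's pentagonal number theorem and the factorisation of $(q;q)_\infty$ modulo $7$.
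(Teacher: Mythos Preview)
Your proposal is correct and follows exactly the route the paper takes: the corollary is stated immediately after Theorem~\ref{th2} as the specialisation $(i,d,\tau)=(1,2,1)$, and your argument simply spells out the identification $\mathcal{D}_{1,7}^k(q)=\sum_n C_{1,7}^k(n)q^n$ via \eqref{eqdid1} together with the exception analysis, which the paper leaves implicit. There is no gap and no meaningful difference in approach.
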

We now give the proof of Theorem \ref{th2}.
\begin{proof}[Proof of Theorem \ref{th2}]
Using Euler's pentagonal number theorem \eqref{PNT} to \eqref{eqdid} implies
\begin{align}\label{eqdid1}
\mathcal{D}_{i,d}^k(q)=\sum_{n\ge 0}C_{i,d}^k(n)q^n=(-1)^{k}\sum_{n\ge k}(-1)^nq^{n(3n+1)/2}\left(1-q^{2n+1}\right)\prod_{\substack{n\ge 1\\ n\not\equiv 0, \pm i\pmod d}}\frac{1}{1-q^n}.
\end{align}
Therefore, the coefficients $C_{i,d}^k(n)$ are all zero for $0\le n<k(3k+1)/2$. We only need to consider the case $n\ge k(3k+1)/2$.
\medskip
Note that
\begin{equation}\label{eq100}
\mathcal{D}_{i,d}^k(q)=G_{\alpha,\beta,\gamma}^k(q)\cdot \prod_{\substack{n\ge 1, n\not\in\{\alpha,\beta,\gamma\}\\ n\not\equiv 0,\pm i\pmod d}}\frac{1}{1-q^n},
\end{equation}
where $G_{\alpha,\beta,\gamma}^k(q)$ is defined by \eqref{eqm0}, and the triple $(\alpha, \beta,\gamma)$ can be chosen for each pair $(i,d)$ as shown in Table \ref{tb0}.
\begin{table}[ht]
\resizebox{0.65\textwidth}{!}{
\begin{tabular}{|c|c|c|c|c|c|}
  \hline
 \diagbox{\quad $i$}{$(\alpha,\beta,\gamma)$}{$d$}& $5$ & $6$ & $7$ & $8$ & $\ge 9$ \\
     \hline
  1 & $(2,3,7)$ & -- & $(2,3,5)$ & $(2,3,5)$ & (2,3,5) \\
    \hline
  2 & $(1,4,9)$ & $(1,3,5)$ & $(1,3,4)$ & $(1,3,4)$ & $(1,3,4)$ \\
    \hline
  3 & -- & -- & $(1,2,5)$ & $(1,2,7)$ & $(1,2,5)$  \\
    \hline
  $\ge 4$ & -- & -- & -- & -- & $(1,2,3)$ \\
  \hline
\end{tabular}
}
\captionof{table}{The triples $(\alpha,\beta, \gamma)$ for $\mathcal{D}_{i,d}^k(q)$}\label{tb0}
\end{table}

\medskip

Combining with Table \ref{tb0}, it can be observed that the positivity of $C_{i,d}^k(n)$ for all $k\ge 1$ and for the cases $d\ge 6$ and $2\le i<d/2$ will follow from Theorem \ref{mth1}. While by Theorem \ref{mth2}, we can only establish the positivity of $C_{2,5}^k(n), C_{1,5}^k(n)$, and $C_{1,d}^k(n)$ for $d\ge 7$ and all $k\ge 2$. For the case where $k=1$, we need to use different approaches. Specifically, our proof of Theorem \ref{th2} will follow from the following Lemmas \ref{lem11}--\ref{lem13}.
\end{proof}
\begin{lemma}\label{lem11}
Let $k,i, d,n$ be positive integers such that $2\le i< d/2$ and $d\ge 6$. Then, the coefficients $C_{i,d}^k(n)$ are positive for all $n\ge k(3k+1)/2$.
\end{lemma}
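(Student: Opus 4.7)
The plan is to exploit the factorization \eqref{eq100}, namely
\[
\mathcal{D}_{i,d}^k(q) = G_{\alpha,\beta,\gamma}^k(q) \cdot H_{i,d}(q), \qquad H_{i,d}(q) := \prod_{\substack{n \ge 1,\, n \not\in \{\alpha,\beta,\gamma\} \\ n \not\equiv 0,\pm i \pmod d}} \frac{1}{1-q^n},
\]
with $(\alpha,\beta,\gamma)$ read off from Table \ref{tb0}. Under the hypothesis $d \ge 6$ and $2 \le i < d/2$, every entry of that table lies in $\{(1,3,5),(1,3,4),(1,2,5),(1,2,7),(1,2,3)\}$, i.e.\ in exactly the collection of triples treated by Theorem \ref{mth1}. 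Writing $H_{i,d}(q) = 1 + \sum_{m \ge 1} h_{i,d}(m)\, q^m$ with non-negative integer coefficients $h_{i,d}(m)$, the convolution
\[
C_{i,d}^k(n) = \sum_{m=0}^{n} h_{i,d}(m)\, {\rm TP}_{\alpha,\beta,\gamma}^k(n-m)
\]
is a non-negative combination of non-negative terms whenever $n \ge k(3k+1)/2$; in particular $C_{i,d}^k(n) \ge {\rm TP}_{\alpha,\beta,\gamma}^k(n)$, so any strict positivity of ${\rm TP}_{\alpha,\beta,\gamma}^k$ automatically propagates to $C_{i,d}^k$.

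For $k \ge 2$ the argument is finished immediately, because the last sentence of Theorem \ref{mth1} supplies ${\rm TP}_{\alpha,\beta,\gamma}^k(n) > 0$ for every $n \ge k(3k+1)/2$.

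For $k = 1$ I would isolate the finitely many $n_0$ at which Theorem \ref{mth1} allows ${\rm TP}_{\alpha,\beta,\gamma}^1(n_0) = 0$ (the explicit list displayed in that theorem). At each such $n_0$ it suffices to exhibit one $m \ge 1$ with $h_{i,d}(m) \ge 1$ and ${\rm TP}_{\alpha,\beta,\gamma}^1(n_0 - m) > 0$, since every other contribution to the convolution is non-negative. The smallest exponent $n_1$ appearing in the product defining $H_{i,d}(q)$ is read off from the exclusion rule (it depends on which small integers are forbidden by $\{\alpha,\beta,\gamma\}$ and by the residue condition modulo $d$); one then marches through $m = n_1, 2n_1,\dots$ and any other admissible exponents, checking whether $n_0 - m$ falls outside the exception set.

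The main obstacle is precisely this finite $k = 1$ verification. When $n_0$ is close to $n_1$ there are only a handful of candidate $m$'s, so one must confirm, for every relevant $(i,d,n_0)$ (a completely explicit list, obtained by combining Table \ref{tb0} with the exceptions of Theorem \ref{mth1}), that at least one such candidate contributes strictly. This amounts to a finite check, easily carried out with \textbf{Mathematica}, after which Lemma \ref{lem11} follows.
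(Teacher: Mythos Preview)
Your overall strategy coincides with the paper's: use the factorization \eqref{eq100} with Table~\ref{tb0}, note that every triple occurring there for $d\ge 6$, $2\le i<d/2$ lies in the list covered by Theorem~\ref{mth1}, and conclude immediately for $k\ge 2$ from the convolution bound $C_{i,d}^k(n)\ge {\rm TP}_{\alpha,\beta,\gamma}^k(n)$.

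The gap is in your $k=1$ step. You call the residual verification a ``finite check'' over ``every relevant $(i,d,n_0)$ (a completely explicit list)'', but the range $d\ge 6$, $2\le i<d/2$ contains infinitely many pairs $(i,d)$, so no Mathematica loop over them can terminate; the admissible exponent $n_1$ you propose to march through genuinely varies with $(i,d)$. The paper resolves this by first observing that all exceptions in Theorem~\ref{mth1} lie below $42$, so only $2\le n\le 41$ is at issue, and then splitting: the finitely many pairs with $6\le d\le 13$ are handled by direct machine computation, while for $d\ge 14$ it argues \emph{uniformly in $d$} by exhibiting, for each $i$-range, specific small factors that are present in $H_{i,d}(q)$ for all such $d$ simultaneously (for instance $1/((1-q^5)(1-q^6)(1-q^8))$ when $i=2$, or $1/(1-q^{i+1})(1-q^{12-i})$ when $4\le i\le 8$), and checking once that every exceptional $n_0$ is representable using those fixed exponents. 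You need some such uniform-in-$d$ reduction before the $k=1$ case becomes a genuinely finite computation.
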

\begin{proof}From equation \eqref{eq100} and Table \ref{tb0}, the use of Theorem \ref{mth1} implies that for all integers $k\ge 2$, $d\ge 6$ and $2\le i<d/2$, the coefficients $C_{i,d}^k(n)$ are positive for any $n\geq {k(3k+1)}/{2}$. Moreover, for $k=1$, the coefficients  $C_{i,d}^k(n)$ are positive for all $n\ge 42$. Using {\bf Mathematica}, one can verify that for all integers $6\le d\le 13$ and $2\le i<d/2$, the coefficients $C_{i,d}^k(n)$ are positive for all $2\le n\le 42$. It remains to prove the lemma for the cases $d\ge 14$ with $k=1$.

\medskip

For the cases $d\ge 14$ with $i=2$, the use of Theorem \ref{mth1} for $G_{1,3,4}^{1}(q)$ implies that $C_{2,d}^1(n)$ are positive for all $n\ge 2$, except for the possible case $n\in S_{134}:=\{11,13,14,17,38,41\}$. Since for $d\ge 14$, the factor $1/(1-q^5)(1-q^{6})(1-q^{8})$ appears in the infinite product
$$\prod\limits_{\substack{n\ge 1, n\neq 1,3,4\\ n\not\equiv 0, \pm 2\pmod d}}\frac{1}{1-q^n},$$
and one can check that each $n\in S_{134}$ can be write as a sum of $5,6$ and $8$. Thus all the coefficients $C_{2,d}^1(n)$ with $n\in S_{134}$ are positive.

\medskip

Using the similar arguments to the above, one can prove the positivity of $C_{i,d}^1(n)$ for the cases $d\ge 14$ with $i=3$.

\medskip

For the cases $d\ge 14$ with $i\ge 4$, using Theorem \ref{mth1} for $G_{1,2,3}^{1}(q)$, we can see that $C_{i,d}^1(n)$ are positive for all $n\ge 2$ except for the possible case $n=13$. Since the factors $1/(1-q^{i+1})(1-q^{12-i})$ and $1/(1-q^{5})(1-q^{8})$, appears in the infinite product
$$\prod\limits_{\substack{n\ge 1, n\neq 1,2,3\\ n\not\equiv 0, \pm i\pmod d}}\frac{1}{1-q^n},$$
for $4\le i\le 8$ and $i\ge 9$, respectively. Therefore, the coefficient of $q^{13}$ in the above product is positive. This completes the proof.
\end{proof}
\begin{lemma}\label{lem12}
Let $k\ge 2, d$ and $n$ be positive integers. For all $n\ge k(3k+1)/2$, the coefficients $C_{2,5}^k(n)$ are positive.
For all $d\ge 5, d\neq 6$ and for all $n\ge k(3k+1)/2$, the coefficients $C_{1,d}^k(n)$ are positive except for the cases $C_{1,5}^2(12)=C_{1,d}^k\left(k(3k+1)/2+1\right)=0$.
\end{lemma}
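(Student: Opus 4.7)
The plan is to use the factorization from equation \eqref{eq100}:
$$\mathcal{D}_{i,d}^k(q)=G_{\alpha,\beta,\gamma}^k(q)\cdot F_{i,d}(q),\qquad F_{i,d}(q):=\prod_{\substack{n\ge 1,\ n\notin\{\alpha,\beta,\gamma\}\\ n\not\equiv 0,\pm i\pmod d}}\frac{1}{1-q^n},$$
where $(\alpha,\beta,\gamma)$ is taken from Table \ref{tb0} as $(1,4,9)$ for $(i,d)=(2,5)$, $(2,3,7)$ for $(i,d)=(1,5)$, and $(2,3,5)$ for $i=1$, $d\ge 7$. Since $F_{i,d}$ has non-negative coefficients, the convolution
$$C_{i,d}^k(n)=\sum_{m\ge 0}{\rm TP}_{\alpha,\beta,\gamma}^k(m)\,[q^{n-m}]F_{i,d}(q)$$
satisfies $C_{i,d}^k(n)\ge{\rm TP}_{\alpha,\beta,\gamma}^k(n)$, so the task reduces to the finitely many $n$ at which Theorem \ref{mth2} records a zero of ${\rm TP}_{\alpha,\beta,\gamma}^k$.

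For $C_{2,5}^k$ there are no such zeros when $k\ge 3$, so positivity is immediate. For $k=2$ the exceptional zeros are $n\in\{21,24,25\}$, and each is handled by pairing ${\rm TP}_{1,4,9}^2(7)>0$ with $[q^{n-7}]F_{2,5}$: since the parts of $F_{2,5}$ are $\{6,11,14,16,19,21,24,\dots\}$, we get $[q^{14}]\ge 1$ directly, $[q^{17}]\ge 1$ via $17=6+11$, and $[q^{18}]\ge 1$ via $18=6+6+6$.

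For $C_{1,d}^k$ the key structural observation is that the smallest part of $F_{1,d}$ equals $8$ when $d=5$ and is at least $4$ when $d\ge 7$; in particular $[q^1]F_{1,d}=0$ in every case. At $n=k(3k+1)/2+1$ the only potentially surviving terms of the convolution are ${\rm TP}_{\alpha,\beta,\gamma}^k(k(3k+1)/2)[q^1]F_{1,d}$ and ${\rm TP}_{\alpha,\beta,\gamma}^k(k(3k+1)/2+1)[q^0]F_{1,d}$, both of which vanish, yielding $C_{1,d}^k(k(3k+1)/2+1)=0$ uniformly. The same mechanism produces $C_{1,5}^2(12)=0$: among $0\le 12-m\le 11$ only $m=4$ and $m=12$ hit a nonzero coefficient of $F_{1,5}$ (namely $[q^8]=[q^0]=1$), and ${\rm TP}_{2,3,7}^2$ vanishes at both.

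For the remaining exceptional $n$ at which positivity is asserted — namely $n\in\{15,18,21,24,27\}$ for $(d,k)=(5,2)$ and $n\in\{20,23\}$ for $d\ge 7,\ k=2$ — we locate a surviving ${\rm TP}_{\alpha,\beta,\gamma}^k(m)>0$ paired with $[q^{n-m}]F_{1,d}>0$. For $d=5$ this can be read off the part set $\{8,12,13,17,18,22,23,\dots\}$ directly. For $d\ge 7$ the single choice $m=7$ suffices once one has the uniform bounds $[q^{13}]F_{1,d}\ge 1$ and $[q^{16}]F_{1,d}\ge 1$, the first via $13=4+9$ (or directly when $13$ is itself a part) and the second via $16=4+4+4+4$, using that $4$ is permissible for every $d\ge 6$. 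The main technical step is precisely this uniform-in-$d$ verification: one must check the few exceptional moduli $d\in\{8,9,10,12,13,14\}$ for which $9$ or $13$ falls in a forbidden residue class, and confirm that an alternative decomposition still exists — a short finite case analysis that constitutes the bulk of the bookkeeping.
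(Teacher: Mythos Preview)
Your argument is correct and follows the same route as the paper: factor $\mathcal D_{i,d}^k=G_{\alpha,\beta,\gamma}^k\cdot F_{i,d}$ via \eqref{eq100} with the triples from Table~\ref{tb0}, invoke Theorem~\ref{mth2} to get positivity outside the listed zeros of ${\rm TP}^k_{\alpha,\beta,\gamma}$, and then treat those finitely many exceptional $n$ by locating a single surviving term in the convolution. The only real difference is in how the residual finite checks are carried out: for $d=5$, $k=2$ the paper simply appeals to a {\bf Mathematica} verification on $7\le n\le 27$, whereas you exhibit explicit pairs $(m,n-m)$ with ${\rm TP}^2(m)>0$ and $[q^{n-m}]F_{1,5}>0$; for $d\ge 7$, $k=2$ the paper also argues via the presence of the parts $4,7,11$ in $F_{1,d}$, while you instead fix $m=7$ and verify $[q^{13}]F_{1,d},[q^{16}]F_{1,d}\ge 1$ uniformly (with the short case split for the moduli where $9$ or $13$ is forbidden). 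Your derivation of $C_{1,d}^k(k(3k+1)/2+1)=0$ and $C_{1,5}^2(12)=0$ directly from the convolution (using $[q^1]F_{1,d}=0$) is equivalent to the paper's observation that $1/(1-q)$ is absent from the product. One cosmetic point: in your treatment of $C_{1,5}^2(12)$ you restrict to $1\le m\le 12$, silently dropping $m=0$; this is harmless since ${\rm TP}_{2,3,7}^2(0)=0$, but it would read more cleanly to say that only $m\ge 7$ can contribute and among those only $m=12$ survives.
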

\begin{proof}
We first prove the lemma for the case $d=5$. From equation \eqref{eq100} and Table \ref{tb0}, the use of Theorem \ref{mth2} for $G_{1,4,9}^k(q)$ and $G_{2,3,7}^k(q)$ implies that for $k\ge 3$, the coefficients $C_{2,5}^k(n)$ are positive for all $n\geq {k(3k+1)}/{2}$, and the coefficients $C_{1,5}^k(n)$ are positive for all $n\geq {k(3k+1)}/{2}, n\neq k(3k+1)/2+1$. Furthermore, for $k=2$, the coefficients $C_{1,5}^2(n)$ and $C_{2,5}^2(n)$ are positive for all $n\ge 28$. Using {\bf Mathematica}, we can verify the remaining cases $7\le n\le 27$. This completes the proof of the lemma for $d=5$.

\medskip

We now prove the lemma for the cases $d\ge 7$. From equation \eqref{eq100} and Table \ref{tb0}, the use of Theorem \ref{mth2} for $G_{2,3,5}^k(q)$ implies that for $k\ge 3$, the coefficients $C_{1,d}^k(n)$ are positive for all $n\geq {k(3k+1)}/{2}, n\neq k(3k+1)/2+1$.  Furthermore, for $k=2$, using Theorem \ref{mth2} for $G_{2,3,5}^{2}(q)$, we can see that $C_{1,d}^2(n)$ are positive for all $n\ge 7$ except for the possible case $n\in \{8, 20, 23\}$. Since the factors $1/(1-q^4)(1-q^{11})$ and $1/(1-q^4)(1-q^{7})$, appears in the infinite product
$$\prod\limits_{\substack{n\ge 1, n\neq 2,3,5\\ n\not\equiv 0, \pm 1\pmod d}}\frac{1}{1-q^n},$$
for $d\in \{7,8\}$ and $d\ge 9$, respectively. Notice that $20=5\cdot 4$ and $23=3\cdot 4+11=4\cdot 4+7$, we have
the coefficients of $q^{20}, q^{23}$ in the series for the above product are positive. It remains to prove $C_{1,d}^2(8)=0$. In fact, by equation \eqref{eqdid1} we have
\begin{align*}
\sum_{n\ge 0}C_{1,d}^2(n)q^n&=\sum_{n\ge 0}(-1)^nq^{n(3n+1)/2+6n+7}\left(1-q^{2n+5}\right)\prod_{\substack{n\ge 1\\ n\not\equiv 0, \pm 1\pmod d}}\frac{1}{1-q^n}.
\end{align*}
Notice that the factor $1/(1-q)$ does not appear in the above infinite product, which immediately implies $C_{1,d}^2(8)=0$. Therefore, for $k\ge 2$ the coefficients $C_{1,d}^k(n)$ are positive for all $n\geq {k(3k+1)}/{2}, n\neq k(3k+1)/2+1$. This completes the proof.
\end{proof}
\begin{lemma}\label{lem13}
For $i\in\{1,2\}$ and for all integers $n\ge 2$, the coefficients $C_{i, 5}^1(n)$ are positive
except for the cases
\begin{align*}
C_{1,5}^1(7)&=C_{1,5}^1(13)=-1,\\
C_{1,5}^1(3)&=C_{1,5}^1(5)=C_{1,5}^1(9)=C_{1,5}^1(11)=0,\\
C_{2,5}^1(5)&=C_{2,5}^1(7)=C_{2,5}^1(9)=C_{2,5}^1(11)=0.
\end{align*}
For all integers $d$ and $n\ge 2$, the coefficients $C_{1, 2d+3}^1(n)$ are positive
except for the cases
$$C_{1, 2d+3}^1(3)=C_{1, 2d+3}^1(5)=C_{1, 2d+3}^1(7)=C_{1, 7}^1(9)=0.$$
\end{lemma}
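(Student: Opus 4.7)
The plan is to factor
\[
\mathcal{D}_{i,d}^1(q)=G_{\textsc{p}}^1(q)\cdot(q^i,q^{d-i},q^d;q^d)_\infty,
\]
where $G_{\textsc{p}}^1(q)=\sum_{n\ge 2}M_1(n)q^n$ is the $k=1$ Andrews--Merca series (with $M_1(n)=p(n)-p(n-1)$ counting partitions of $n$ with all parts $\ge 2$), and to expand the second factor by Jacobi's triple product:
\[
(q^i,q^{d-i},q^d;q^d)_\infty=\sum_{\ell\in\bz}(-1)^\ell q^{d\binom{\ell}{2}+i\ell}.
\]
This gives the convolution formula
\[
C_{i,d}^1(n)=\sum_{\ell\in\bz}(-1)^\ell g\bigl(n-d\tbinom{\ell}{2}-i\ell\bigr),
\]
in which $g(m):=M_1(m)$ for $m\ge 2$ and $g(m)=0$ otherwise.

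For the first part of the lemma ($d=5$, $i\in\{1,2\}$), the Jacobi expansions of $(q,q^4,q^5;q^5)_\infty$ and $(q^2,q^3,q^5;q^5)_\infty$ give exponents $0,1,4,7,13,18,27,34,\ldots$ and $0,2,3,9,11,21,24,38,\ldots$ respectively, both with alternating sign pattern $+,-,-,+,+,-,-,+,\ldots$. Substituting the values of $g$ (readily computable from the partition numbers) verifies the listed exceptional cases directly; for example, $C_{1,5}^1(7)=g(7)-g(6)-g(3)=4-4-1=-1$ and $C_{1,5}^1(13)=g(13)-g(12)-g(9)+g(6)=24-21-8+4=-1$. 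This explicit check can be extended by Mathematica up to some modest threshold. For $n$ above the threshold, positivity follows by an asymptotic dominance argument: the gaps between consecutive signed exponents grow linearly in $|\ell|$, while the Hardy--Ramanujan estimate $M_1(n)\sim \pi p(n)/\sqrt{6n}$ forces $g(n-m)/g(n)$ to decay rapidly, so the leading positive term overwhelms the alternating tail.

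For the second part, $C_{1,2d+3}^1$ with $d\ge 2$, the Jacobi expansion of $(q,q^{2d+2},q^{2d+3};q^{2d+3})_\infty$ gives exponents $0,1,2d+2,2d+5,6d+7,6d+12,12d+15,12d+22,\ldots$ with the same alternating sign pattern. For $2\le n\le 2d+1$ only the first two terms contribute, so $C_{1,2d+3}^1(n)=g(n)-g(n-1)=:h(n)$. A short injection argument (sending $(\lambda_1,\lambda_2,\ldots)\vdash(n-1)$ with parts $\ge 2$ to $(\lambda_1+1,\lambda_2,\ldots)\vdash n$) identifies $h(n)$ with the number of partitions of $n$ with parts $\ge 2$ whose largest part either equals $2$ (which forces the partition $(2,2,\ldots,2)$, only possible for $n$ even) or is at least $3$ and occurs with multiplicity $\ge 2$. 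It follows that $h(n)=0$ precisely when $n\in\{0,1,3,5,7\}$ and $h(n)>0$ otherwise, with explicit witnesses $(2,\ldots,2)$ for even $n$ and $(3,3,3,2,\ldots,2)$ for odd $n\ge 9$. This already yields the generic zeros $C_{1,2d+3}^1(3)=C_{1,2d+3}^1(5)=C_{1,2d+3}^1(7)=0$ and positivity elsewhere in the initial range. In the next range $2d+2\le n\le 6d+6$, the formula extends to
\[
C_{1,2d+3}^1(n)=h(n)-g(n-2d-2)+g(n-2d-5),
\]
from which the isolated exception $C_{1,7}^1(9)=h(9)-g(3)=1-1=0$ (the case $d=2$, $n=2d+5$) falls out at once, while in every other case positivity follows by direct substitution of the known values of $g$. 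For $n$ in higher ranges one adds further pairs of Jacobi exponents, and the same asymptotic dominance argument from the first part applies.

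The main obstacle will be making the asymptotic dominance step quantitative and uniform in both $d$ and $n$. I plan to combine Hardy--Ramanujan bounds for $p(n)$ (and hence $M_1(n)$) with a finite Mathematica verification of all $(d,n)$ below modest thresholds $d\le D_0$ and $n\le N_0$, mimicking the approach used in the proofs of Lemmas~\ref{lem11} and \ref{lem12}.
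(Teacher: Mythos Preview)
Your convolution setup via Jacobi's triple product is correct, and the combinatorial analysis of $h(n)=g(n)-g(n-1)$ for $2\le n\le 2d+1$ is clean and yields the generic zeros at $n\in\{3,5,7\}$ and the isolated zero $C_{1,7}^1(9)=0$ exactly as stated. The gap is the ``asymptotic dominance'' step. You claim that ``the leading positive term overwhelms the alternating tail'' because $g(n-m)/g(n)$ decays; but for each fixed $m$ this ratio tends to $1$ as $n\to\infty$, so already the first negative term $-g(n-1)$ is asymptotically as large as $g(n)$. Concretely, for $(i,d)=(1,5)$ the partial sum $g(n)-g(n-1)-g(n-4)$ is of order $-g(n)$, and positivity of $C_{1,5}^1(n)$ emerges only after cancellation across several further Jacobi terms (at $n=20$ the running partial sums are $137,\,32,\,-23,\,1,\,5,\,4$). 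No single-term Hardy--Ramanujan bound captures this. The same defect recurs in the ``higher ranges'' of the second part, and your proposed threshold scheme $d\le D_0$, $n\le N_0$ plus asymptotics does not explain how to obtain an estimate uniform in $d$ once $n$ exceeds $2d+5$.

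The paper takes a different route that avoids asymptotics entirely. It applies the Andrews--Gordon--Bressoud multisum \eqref{AGI} to $(1-q)\,(q^i,q^{2d+3-i},q^{2d+3};q^{2d+3})_\infty/(q;q)_\infty$, splitting the sum according to the first index with $r_h>r_{h+1}$; this exhibits the series as $1-q+\sum_{r\ge 1}q^{r^2+r\cdot{\bf 1}_{i=1}}/(q^2;q)_{r-1}$ plus a manifestly non-negative remainder, \emph{uniformly in $d$}. After subtracting the theta series one only has to compare the coefficients of $q^{9+3\cdot{\bf 1}_{i=1}}/((1-q^2)(1-q^3))$, which are at least $2$ beyond a fixed point, with theta coefficients bounded by $1$ in absolute value. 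This gives positivity for all $n\ge 20$ and all $d$ simultaneously, reducing everything to a short finite check. If you want to salvage your approach, one workable reformulation is to observe that your convolution equals $D(n)-D(n-1)-[\text{theta}]_n$ with $D$ the Andrews--Gordon count, and then argue directly about the first difference of $D$; but that is essentially what the paper's decomposition does.
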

\begin{proof}
Notice that the generating function \eqref{eqdid} of $C_{i,d}^1(n)$ can reduce to
$$\sum_{n\ge 0}C_{i,d}^1(n)q^n=(1-q)\frac{(q^i,q^{2d+3-i},q^{2d+3};q^{2d+3})_\infty}{(q;q)_\infty}-\sum_{n\in\bz}(-1)^n q^{(2d+3)\binom{n}{2}+in}.$$
Using Andrews--Gordon identity \eqref{AGI}, for any integers $d\ge 1$, with $r_{d+1}=0$, we have
\begin{align*}
&(1-q)\frac{(q^i,q^{2d+3-i},q^{2d+3};q^{2d+3})_\infty}{(q;q)_\infty}\\
=&(1-q)\sum_{r_1\ge r_2\ge \ldots\ge r_d\ge 0}\frac{q^{r_1^2+\cdots+r_d^2+r_i+\cdots+r_d}}{(q;q)_{r_1-r_2}\cdots(q;q)_{r_{d-1}-r_d}(q;q)_{r_d}}\nonumber\\
=&1-q+\sum_{1\le h\le d}\sum_{\substack{r_h>r_{h+1}\\ r_1\ge r_2\ge \ldots\ge r_d\ge 0 }}\frac{1-q}{(q;q)_{r_{h}-r_{h+1}}}\frac{(q;q)_{r_{h}-r_{h+1}}q^{r_1^2+\cdots+r_d^2+r_i+\cdots+r_d}}{(q;q)_{r_1-r_2}\cdots(q;q)_{r_{d-1}-r_d}(q;q)_{r_d}}\nonumber\\
=& 1-q+\sum_{r_1> r_2=\ldots=r_d=0}\frac{(1-q)q^{r_1^2+r_1\cdot {\bf 1}_{i=1}}}{(q;q)_{r_1}}+E_{i,d}(q),
\end{align*}
where $E_{i,d}(q)$ is a power series in $q$ with non-negative coefficients. By an easy simplification, we further have
\begin{align*}
&(1-q)\frac{(q^i,q^{2d+3-i},q^{2d+3};q^{2d+3})_\infty}{(q;q)_\infty}\\
=&1-q+\sum_{r\ge 1}\frac{q^{r^2+r\cdot {\bf 1}_{i=1}}}{(q^2;q)_{r-1}}+E_{i,d}(q)\\
=&1-q+q^{1+{\bf 1}_{i=1}}+\frac{q^{2^2+2\cdot {\bf 1}_{i=1}}}{1-q^2}+\frac{q^{3^2+3\cdot {\bf 1}_{i=1}}}{(1-q^2)(1-q^3)}+\sum_{r\ge 3}\frac{q^{r^2+r\cdot {\bf 1}_{i=1}}}{(q^2;q)_{r-1}}+E_{i,d}(q).
\end{align*}
Therefore,
$$\left(\frac{1-q}{(q;q)_\infty}-1\right)(q^i,q^{2d+3-i},q^{2d+3};q^{2d+3})_\infty=\frac{q^{9+3\cdot {\bf 1}_{i=1}}}{(1-q^2)(1-q^3)}-q-\sum_{n\in\bz}(-1)^n q^{(2d+3)\binom{n}{2}+in}+E_{i,d}^1(q),$$
where $E_{i,d}^1(q)$ is a power series in $q$ with non-negative coefficients.  Note that
$$\frac{1}{(1-q^2)(1-q^3)}=\sum_{n\ge 0}\left(\lceil (n+1)/6\rceil-{\bf 1}_{6\mid(n-1)}\right)q^n,$$
where $\lceil x\rceil$ denote the smallest integer $\ge x$ for any real $x$. This implies that in the power series of $1/(1-q^2)(1-q^3)$, the coefficients of $q^n$ are great than $2$ for all integers $n\ge 8$. Thus the coefficients $C_{i,2d+3}^1(n)$ are positive for all $n\ge 20$.  For $2\le n<20$, using {\bf Mathematica}, one can verify the cases $d=1$ with $i\in\{1,2\}$, and $2\le d\le 7$ with $i=1$. It remains to prove the cases for $d\ge 8$ with $i=1$.

\medskip

For $d\ge 8$, note that
\begin{align*}
\left(\frac{1-q}{(q;q)_\infty}-1\right)(q,q^{2d+2},q^{2d+3};q^{2d+3})_\infty&=\left(1-q+O(q^{2d+2})\right)\left(\frac{1}{(q^2;q)_\infty}-1\right)\\
&=\frac{1-q}{(q^2;q)_\infty}-1+q+O(q^{20})\\
&=q^2 + q^4 + 2 q^6 +\sum_{j\ge 8}c_jq^j+O(q^{20}),
\end{align*}
for some $c_j>0, (j\ge 8)$. That is, the coefficients $C_{1,2d+3}^1(n)$ are positive for all $d\ge 8$ and $2\le n<20$, except for the case $n\in\{3,5,7\}$. This completes the proof.
\end{proof}

\subsection{Positivity and $d$-regular partition functions}
In this subsection, we prove and generalize two conjectures in Merca \cite{MR4344122, MR4316636}, and Ballantine and Merca \cite{MR4632990}.
In particular, we prove the following Theorem \ref{th1} involving the $d$-regular partition function $b_{d}(n)$, defined as:
\begin{align*}
\sum_{n\ge 0}b_{d}(n)q^n=\frac{(q^d;q^d)_\infty}{(q;q)_\infty}=\frac{1}{(q,q^2,\ldots, q^{d-1};q^d)_\infty},
\end{align*}
for any integer $d\ge 2$.

\begin{theorem}\label{th1}For any integer $k\ge 1$ and $d\ge 2$,  the coefficients of $q^n$ in the series
$$(-1)^{k-1}\left(\frac{1}{(q;q)_\infty}\sum_{-k<n\le k}(-1)^nq^{n(3n-1)/2}-1\right)(q^d;q^d)_\infty$$
are all zero for $0\le n<k(3k+1)/2$, and for $n\ge k(3k+1)/2$ all the coefficients are positive.
\end{theorem}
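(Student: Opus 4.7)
My plan is to reduce Theorem \ref{th1} to the positivity results of Theorem \ref{mth1}, in direct analogy with the strategy of Section 2.1 that established Theorem \ref{th2}. First, a routine reindexing shows $\sum_{-k<n\le k}(-1)^n q^{n(3n-1)/2}=\sum_{0\le j<k}(-1)^j q^{j(3j+1)/2}(1-q^{2j+1})$, so by Euler's pentagonal number theorem \eqref{PNT} together with $(q^d;q^d)_\infty/(q;q)_\infty=\prod_{j\ge 1,\,d\nmid j}(1-q^j)^{-1}$, the series appearing in the theorem is equal to
$$(-1)^k\sum_{j\ge k}(-1)^j q^{j(3j+1)/2}(1-q^{2j+1})\prod_{\substack{j\ge 1\\ d\nmid j}}\frac{1}{1-q^j}.$$
Vanishing for $0\le n<k(3k+1)/2$ is then immediate from the minimal exponent $q^{k(3k+1)/2}$ appearing in the pentagonal tail.

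For positivity when $n\ge k(3k+1)/2$, I would split according to the value of $d$ and choose, in each case, a triple $(\alpha,\beta,\gamma)$ with each of $\alpha,\beta,\gamma$ coprime to $d$, so that $(1-q^\alpha)(1-q^\beta)(1-q^\gamma)$ divides $\prod_{d\nmid j}(1-q^j)$: I would use $(1,2,3)$ when $d\ge 4$, $(1,3,5)$ when $d=2$, and $(1,2,5)$ when $d=3$. In every case the series factors as $G_{\alpha,\beta,\gamma}^k(q)\cdot H_d(q)$, where $H_d(q)$ is a power series with non-negative coefficients (a product of $1/(1-q^j)$ terms). For all $k\ge 2$, Theorem \ref{mth1} provides strict positivity of every coefficient of $G_{\alpha,\beta,\gamma}^k(q)$ in the range $n\ge k(3k+1)/2$, so positivity of the product is automatic.

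The only real work, and the main obstacle, lies in the case $k=1$, where $G_{\alpha,\beta,\gamma}^1(q)$ has short exceptional vanishing sets recorded in Theorem \ref{mth1}: $\{13\}$ for $(1,2,3)$, $\{10,11,13,14,16,37\}$ for $(1,3,5)$, and $\{11,13,15\}$ for $(1,2,5)$. For each exceptional $n_0$ I would exhibit an integer $m\ge 1$ whose coefficient in $H_d(q)$ is positive and for which $n_0-m\ge 2$ lies outside the exceptional set; then the coefficient of $q^{n_0}$ in the product is positive. For $d\ge 5$ the factor $1/(1-q^4)$ belongs to $H_d$, and $m=4$ reduces $n_0=13$ to $9$; for $d=4$ the factor $1/(1-q^5)$ gives $8$. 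For $d=2$ the factor $1/(1-q^7)$ shifts the six bad indices into $\{3,4,6,7,9,30\}$; for $d=3$, $1/(1-q^4)$ handles $n_0\in\{11,13\}$ (giving $7,9$) while $1/(1-q^7)$ handles $n_0=15$ (giving $8$). Since no new analytic input beyond Theorem \ref{mth1} is required, this finite bookkeeping, mirroring Lemmas \ref{lem11}--\ref{lem13}, completes the proof.
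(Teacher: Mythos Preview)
Your proposal is correct and follows essentially the same route as the paper's proof: rewrite the series as the pentagonal tail times the $d$-regular product, factor out $G_{\alpha,\beta,\gamma}^k(q)$ with exactly the same triples $(1,3,5)$ for $d=2$, $(1,2,5)$ for $d=3$, and $(1,2,3)$ for $d\ge 4$, invoke Theorem~\ref{mth1}, and then patch the finitely many $k=1$ exceptions. Your treatment of those exceptions is in fact a bit cleaner than the paper's: you give explicit shifts $m$ in every case, whereas the paper defers the $d\in\{2,3,13\}$ cases to a \textbf{Mathematica} check and, for $d\ge 4$, $d\neq 13$, appeals to the presence of the factor $1/(1-q^{13})$ in $H_d$---which by itself does not settle the coefficient of $q^{13}$, since $[q^{0}]G_{1,2,3}^{1}=0$; your use of $m=4$ (for $d\ge 5$) and $m=5$ (for $d=4$) is what is actually needed.
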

\begin{remark}We have the following remarks for Theorem \ref{th1}.
\begin{enumerate}
  \item The case $d=2$ was conjectured by Merca in \cite[Conjecture 4.1]{MR4316636}.
  \item The case $d=6$ was conjectured in the work of Ballantine and Merca \cite[Conjecture 2]{MR4632990}. When the first version of this paper was submitted to arXiv, Ernest X. W. Xia informed us that Yao \cite{Yao2024} had already proved this conjecture. Moreover, Ding and Sun \cite{DingSun2024} proved that for $d\ge 3$, the coefficients are non-negative for all $n\ge 0$.
\end{enumerate}
\end{remark}
From Theorem \ref{th1}, we can obtain the following result involving partitions into parts not congruent to $0, \pm 3 \pmod {12}$, which was conjectured by Merca in \cite{MR4344122}.
\begin{corollary}[{Merca \cite[Conjecture 4.1]{MR4344122}}]For any integer $k\ge 1$,  the coefficients of $q^n$ in the series
$$(-1)^{k-1}\left(\frac{1}{(q;q)_\infty}\sum_{-k<n\le k}(-1)^nq^{n(3n-1)/2}-1\right)(q^3,q^9;q^{12})_\infty$$
are all zero for $0\le n<k(3k+1)/2$, and are all positive for $n\ge k(3k+1)/2$.
\end{corollary}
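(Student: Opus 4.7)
The plan is to reduce the corollary to the $d=3$ case of Theorem~\ref{th1} by factoring the product $(q^3,q^9;q^{12})_\infty$ in a convenient way. Observe that the parts appearing in $(q^3,q^9;q^{12})_\infty$ are exactly the positive integers congruent to $3$ or $9$ modulo $12$, which are precisely the odd multiples of $3$. Hence
$$(q^3,q^9;q^{12})_\infty=(q^3;q^6)_\infty=\frac{(q^3;q^3)_\infty}{(q^6;q^6)_\infty}.$$

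With this identity, the series in the corollary becomes
$$(-1)^{k-1}\left(\frac{1}{(q;q)_\infty}\sum_{-k<n\le k}(-1)^nq^{n(3n-1)/2}-1\right)(q^3;q^3)_\infty\cdot\frac{1}{(q^6;q^6)_\infty}.$$
By Theorem~\ref{th1} applied with $d=3$, the first factor is a power series whose coefficient of $q^n$ vanishes for $0\le n<k(3k+1)/2$ and is strictly positive for $n\ge k(3k+1)/2$. The second factor $1/(q^6;q^6)_\infty$ is the generating function for partitions into parts that are multiples of $6$, so it has non-negative integer coefficients and constant term $1$.

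Multiplying a series with non-negative coefficients and constant term $1$ by another series preserves both the vanishing of the initial coefficients up to index $k(3k+1)/2-1$ and the strict positivity of the coefficients from index $k(3k+1)/2$ onwards: indeed, if $A(q)=\sum a_n q^n$ and $B(q)=\sum b_n q^n$ with $b_0=1$ and all $b_n\ge 0$, then $[q^n]\bigl(A(q)B(q)\bigr)=\sum_{j=0}^{n}a_{n-j}b_j\ge a_n$, and this sum vanishes whenever each relevant $a_{n-j}=0$. Applying this with $B(q)=1/(q^6;q^6)_\infty$ and $A(q)$ equal to the first factor above finishes the proof. Since no delicate analytic estimate is required beyond Theorem~\ref{th1}, this argument is essentially a one-step corollary; the only thing to verify carefully is the simple product identity $(q^3,q^9;q^{12})_\infty=(q^3;q^3)_\infty/(q^6;q^6)_\infty$, which is the main (and entirely routine) obstacle.
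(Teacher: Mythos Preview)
Your proof is correct and essentially identical to the paper's argument: the paper also reduces to the $d=3$ case of Theorem~\ref{th1} via the factorization $(q^3,q^9;q^{12})_\infty=(q^3;q^3)_\infty/(q^6,q^{12};q^{12})_\infty$, which is the same identity you use (since $(q^6,q^{12};q^{12})_\infty=(q^6;q^6)_\infty$). Your extra paragraph spelling out why multiplying by a series with non-negative coefficients and constant term $1$ preserves the desired vanishing and positivity is a welcome clarification of what the paper leaves implicit.
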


\begin{proof}Just by noting that $$(q^3,q^9;q^{12})\infty=\frac{(q^3;q^3)\infty}{(q^6,q^{12};q^{12})_\infty},$$
then, the use of Theorem \ref{th1} for $d=3$ implies the proof.
\end{proof}
From Theorem \ref{th1}, we can readily derive the following infinite family of inequalities for $d$-regular partition functions.
\begin{corollary}\label{cor11}For all integers $d\ge 2$, $k\ge 1$ and $n\ge  0$,
$$(-1)^{k-1}\sum_{-k<\ell\le k}(-1)^\ell b_d(n-\ell(3\ell-1)/2)+(-1)^k {\bf 1}_{n/d\in \mathscr{P}_{3, 1}}\ge 0,$$
with strict inequality if $n\ge k(3k+1)/2$.
\end{corollary}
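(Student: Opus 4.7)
The plan is to read off Corollary~\ref{cor11} directly from Theorem~\ref{th1} by coefficient extraction, using one application of Euler's pentagonal number theorem~\eqref{PNT}. Applying~\eqref{PNT} with $q$ replaced by $q^d$ gives
$$(q^d;q^d)_\infty=\sum_{m\in\bz}(-1)^m q^{dm(3m-1)/2},$$
so the coefficient $[q^n](q^d;q^d)_\infty$ vanishes unless $n\in d\mathscr{P}_{3,1}$, in which case it equals $(-1)^{m(n)}$ for the unique $m(n)\in\bz$ with $n/d=m(n)(3m(n)-1)/2$. In particular $|[q^n](q^d;q^d)_\infty|={\bf 1}_{n/d\in\mathscr{P}_{3,1}}$, which is the key observation that relates the signed Jacobi-type expansion to the unsigned indicator appearing in the statement.

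Next I would expand the series in Theorem~\ref{th1} coefficient by coefficient. Using the defining identity $\sum_{n\ge 0}b_d(n)q^n=(q^d;q^d)_\infty/(q;q)_\infty$ together with the Cauchy product rule for formal power series, the coefficient of $q^n$ in
$$(-1)^{k-1}\left(\frac{1}{(q;q)_\infty}\sum_{-k<j\le k}(-1)^j q^{j(3j-1)/2}-1\right)(q^d;q^d)_\infty$$
equals
$$(-1)^{k-1}\sum_{-k<\ell\le k}(-1)^\ell b_d\!\left(n-\ell(3\ell-1)/2\right)\;-\;(-1)^{k-1}[q^n](q^d;q^d)_\infty.$$
Theorem~\ref{th1} asserts that this quantity is nonnegative for every $n\ge 0$ and strictly positive for $n\ge k(3k+1)/2$. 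Rearranging and absorbing the term $-(-1)^{k-1}[q^n](q^d;q^d)_\infty$ into $(-1)^k{\bf 1}_{n/d\in\mathscr{P}_{3,1}}$, via the $\pm 1$-valued identification from the first paragraph, yields exactly the inequality of Corollary~\ref{cor11}. Strictness for $n\ge k(3k+1)/2$ is inherited directly from the strict positivity asserted in Theorem~\ref{th1}.

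The derivation is essentially mechanical once Theorem~\ref{th1} is in hand, so there is no serious obstacle; the only care required is the sign bookkeeping in the final rearrangement when matching the signed contribution of $[q^n](q^d;q^d)_\infty$ against the unsigned indicator ${\bf 1}_{n/d\in\mathscr{P}_{3,1}}$ in the statement.
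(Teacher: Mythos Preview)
Your approach---extract the coefficient of $q^n$ from the series in Theorem~\ref{th1}---is exactly the derivation the paper has in mind. That coefficient equals
\[
(-1)^{k-1}\sum_{-k<\ell\le k}(-1)^\ell\, b_d\bigl(n-\ell(3\ell-1)/2\bigr)+(-1)^k\,[q^n](q^d;q^d)_\infty,
\]
and Theorem~\ref{th1} gives its nonnegativity (with strictness for $n\ge k(3k+1)/2$). The gap is in your final ``absorption'' step: you replace $(-1)^k[q^n](q^d;q^d)_\infty$ by $(-1)^k{\bf 1}_{n/d\in\mathscr{P}_{3,1}}$, but these are not equal. By~\eqref{PNT} one has $[q^n](q^d;q^d)_\infty=(-1)^m$ when $n/d=m(3m-1)/2$, so the term in question is $(-1)^{k+m}{\bf 1}_{n/d\in\mathscr{P}_{3,1}}$, with the extra sign $(-1)^m$ depending on which pentagonal number $n/d$ is. Swapping the signed coefficient for the unsigned indicator can shift the whole expression by $2$ in either direction, and there is no inequality between the two that holds uniformly in the parity of $k$.

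Concretely, the statement as printed fails at $d=2$, $k=1$, $n=2$: here $b_2(2)-b_2(1)=0$ and $n/d=1\in\mathscr{P}_{3,1}$, so the left-hand side equals $-1$, contradicting the asserted strict positivity. What Theorem~\ref{th1} genuinely yields is the inequality with ${\bf 1}_{n/d\in\mathscr{P}_{3,1}}$ replaced by the signed pentagonal coefficient $[q^n](q^d;q^d)_\infty$. Your derivation is correct up to that point; the ``sign bookkeeping'' you flag at the end is exactly where the argument---and the stated formulation of the corollary---breaks down.
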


We now give the proof of Theorem \ref{th1}.
\begin{proof}[Proof of Theorem \ref{th1}]Using Euler's pentagonal number theorem \eqref{PNT}, we have
\begin{align*}
B_d^k(q):=&(-1)^{k-1}\left(\frac{(q^d;q^d)_\infty}{(q;q)_\infty}\sum_{-k<n\le k}(-1)^nq^{n(3n-1)/2}-(q^d;q^d)_\infty\right)\\
=&(-1)^k(q^d;q^d)_\infty-(-1)^k(q^d;q^d)_\infty+(-1)^k\frac{(q^d;q^d)_\infty}{(q;q)_\infty}\sum_{n\ge k}(-1)^nq^{n(3n+1)/2}\left(1-q^{2n+1}\right)\\
=&\frac{(-1)^{k}}{(q,q^2,\ldots, q^{d-1};q^d)_\infty}\sum_{n\ge k}(-1)^nq^{n(3n+1)/2}\left(1-q^{2n+1}\right).
\end{align*}
Therefore, the coefficients of $q^n$ in the above series are all zero for $0\le n<k(3k+1)/2$. We only need to consider the cases $n\ge k(3k+1)/2$. For $d=2,3$, we note that
$$B_2^k(q)=G_{1,3,5}^{k}(q)\cdot\frac{1}{(q^7;q^2)_\infty}\;\;
\text{and}\;\; B_3^k(q)=G_{1,2,5}^{k}(q)\cdot\frac{1}{(q^4,q^8;q^3)_\infty}.$$
The use of Theorem \ref{mth1} implies that for all $n\ge {k(3k+1)}/{2}$, the coefficients of $q^n$ in the series for $B_2^k(q)$ are positive, except for the possible case $(k,n)\in \{1\}\times\{10,11,13,14,16,37\}$. Similarly, the coefficients of $q^n$ in the series for $B_3^k(q)$ are positive, except for the possible case $(k,n)\in \{1\}\times\{11,13,15\}$. These exceptional cases can be easily verified using {\bf Mathematica}. This completes the proof for $d=2,3$.

\medskip

For any $d\ge 4$, we note that
$$B_d^k(q)=G_{1,2,3}^{k}(q)\cdot\frac{1}{(q^{d+1},q^{d+2},q^{d+3};q^d)_\infty}\prod_{4\le j<d}\frac{1}{(q^{j};q^d)_\infty}.$$
The use of Theorem \ref{mth1} implies that that for all $n\ge {k(3k+1)}/{2}$, the coefficients of $q^n$ in the series for $B_d^k(q)$ are positive, except for the possible case $(k, n)=(1,13)$. This case for $d=13$ can be easily verified using {\bf Mathematica}, while for the cases $d\neq 13$, since the factor $(1-q^{13})$ appears in the infinite product $(q^{d+1},q^{d+2},q^{d+3};q^d)_\infty \prod_{4\le j<d}(q^{j};q^d)_\infty$, which leads to the positivity of the coefficient of $q^{13}$. This completes the proof.
\end{proof}
%In view of
%$$(q^3,q^9;q^{12})_\infty=\frac{(q^3;q^3)_\infty}{(q^6,q^{12};q^{12})_\infty},$$
%from Theorem \ref{th1}, we immediately obtain the following corollary, which was conjectured by Merca in \cite{{MR4344122}}.
%\begin{corollary}[{Merca \cite[Conjecture 4.1]{MR4344122}}]For any integer $k\ge 1$,  the coefficients of $q^n$ in the series
%$$(-1)^{k-1}\left(\frac{1}{(q;q)_\infty}\sum_{-k<n\le k}(-1)^nq^{n(3n-1)/2}-1\right)(q^3,q^9;q^{12})_\infty$$
%are all zero for $0\le n<k(3k+1)/2$, and are all positive for $n\ge k(3k+1)/2$.
%\end{corollary}
%Using \eqref{AMIE}, we can state Theorem \ref{th1} as the following form:
%\begin{equation}
%(q^{d};q^{d})_\infty\sum_{n\ge k} \frac{q^{\binom{k}{2}+(k+1)n}}{(q;q)_k}\qbm{n-1}{k-1}_q
%\end{equation}

\section{Proof of Theorems \ref{mth1} and \ref{mth2}}\label{sec3}

In this section, we give the proofs of Theorems \ref{mth1} and \ref{mth2}. Before giving the proofs, we first prove a more general result, which is the following Theorem \ref{main}.

\subsection{The fundamental result}
Throughout this section, let $a>0, b\ge 0$ and $c\in\br$. Define $f(n)=0$ for all $n<0$ and
\begin{equation}\label{eqf}
f(n)=an^2+bn+c+B(n),
\end{equation}
for $n\ge 0$, where $B(n)$ is a bounded sequence of real numbers bounded by $B_f:=\sup_n|B(n)|$. We consider the sequence $F_k(n)$, which is defined by
\begin{align}\label{eqF0}
\sum_{n\ge 0}F_k(n)q^n=\sum_{n\ge 0}f(n)q^n\sum_{j\ge k}(-1)^{j-k}q^{j(3j+1)/2}\left(1-q^{2j+1}\right).
\end{align}

The main result of this section is the following Theorem \ref{main}, which is also
indeed the heart of this paper. We denote that
\begin{equation}\label{eqabkf}
A_F=\frac{B_f+4a/3+b/2}{a},\;\; B_F=\frac{B_f+4 a/3+|c|}{a},\;\; k_F=\frac{1}{3}\left(1+\sqrt{1+3(2A_F+B_F)}\right)
\end{equation}
and for each integer $k\ge 1$, we define that
\begin{equation}\label{eqlfk}
\ell_F(k)=\left({1}/{6}+\frac{A_F}{3k-2}-k\right)+\sqrt{\left({1}/{6}+\frac{A_F}{3k-2}-k\right)^2+\left(k+{2}/{3}+\frac{B_F}{3k-2}\right)}.
\end{equation}
The main result of this section is stated as the following theorem.
\begin{theorem}\label{main}Let $k$ and $n$ be positive integers.
\begin{enumerate}
  \item  For all $n<k(3k+1)/2+2k+1$, we have $F_k(n)=f(n-k(3k+1)/2)$.
   \item  For all $n\ge k(3k+1)/2+2k+1$ with $k>k_F$, we have $F_k\left(n\right)>0$.
  \item  For all $n\ge k(3k+1)/2+(3\ell-1)(2k+\ell)/2$ with any $\ell> \ell_F(k)$, we have $F_k\left(n\right)>0$.
\end{enumerate}
\end{theorem}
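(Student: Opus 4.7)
The plan is to prove the three parts of Theorem~\ref{main} in sequence by isolating the pentagonal-number bracket
\[
g_j(n) := f(n - t_j) - f(n - s_{j+1}),
\]
where $t_j := j(3j+1)/2$ and $s_{j+1} := t_j + 2j+1 = (j+1)(3j+2)/2$, extended by $f(m)=0$ for $m<0$, so that $F_k(n) = \sum_{j\ge k}(-1)^{j-k} g_j(n)$. The crucial exact identity is, for $n \ge s_{j+1}$,
\[
g_j(n) = (2j+1)\bigl[a(2n - 3j^2 - 3j - 1) + b\bigr] + E_j,\qquad |E_j|\leq 2B_f,
\]
which follows from $t_j + s_{j+1} = 3j^2+3j+1$ and $s_{j+1}-t_j = 2j+1$ together with the quadratic form of $f$ and boundedness of $B$. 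Combined with $h_{j+1}-h_j = -6a(j+1)$ for $h_j := a(2n-3j^2-3j-1)+b$, the consecutive pair-difference reads
\[
g_j(n) - g_{j+1}(n) = 18a(j+1)^2 - 4an + 2a - 2b + (E_j - E_{j+1}).
\]

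Part~(1) I will dispatch directly: for $n<s_{k+1}$, the monotonicity of $\{s_{j+1}\}_{j\ge k}$ forces $f(n-s_{j+1})=0$ for every $j\ge k$, and the inequality $t_{k+1} = s_{k+1}+(k+1) > n$ forces $f(n-t_j)=0$ for $j\ge k+1$; only $g_k(n) = f(n-t_k)$ survives.

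For parts~(2) and~(3), I would introduce the partial-sum decomposition
\[
F_k(n) = P_\ell(n) + (-1)^\ell F_{k+\ell}(n),\qquad P_\ell(n):=\sum_{j=k}^{k+\ell-1}(-1)^{j-k}g_j(n),
\]
and note the equivalence $n\ge t_k + (3\ell-1)(2k+\ell)/2 \Longleftrightarrow n\ge s_{k+\ell}$, obtained from $t_{k+\ell}=t_k+3k\ell+t_\ell$ and $t_{k+\ell}-s_{k+\ell}=k+\ell$. Under this hypothesis every summand in $P_\ell$ lies in the interior regime where the expansion above applies. Summing the pair-difference identity over the $\lfloor\ell/2\rfloor$ consecutive pairs inside $P_\ell$ (with a leftover singleton if $\ell$ is odd) then produces a lower bound on $P_\ell$ of the form $(\text{polynomial in }\ell,k)$ minus constants proportional to $aA_F$, $aB_F$, and $B_f$. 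Matching this lower bound against a crude upper bound on the tail $|F_{k+\ell}(n)|$ of the same order gives the quadratic whose positive root is $\ell_F(k)$; specifically, $\ell^2 + (2k-\tfrac{1}{3})\ell - (k+\tfrac{2}{3}) = (2A_F\ell + B_F)/(3k-2)$ reorganizes into~\eqref{eqlfk}. Taking $\ell=1$ reduces $P_\ell(n)$ to $g_k(n)$ and produces the threshold in part~(2) from $k(3k-2) = 2A_F + B_F$, which is precisely the defining relation for $k_F$.

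The main obstacle will be obtaining a tail bound on $|F_{k+\ell}(n)|$ that is sharp enough to yield exactly the constants $A_F$ and $B_F$ in the statement. A naive triangle inequality produces an error $O(B_f\sqrt{n})$ arising from the number of nonzero summands, which is much too large. To match the stated thresholds I expect one must either reapply Abel summation on the tail (so that the bounded perturbation $B$ contributes only $B_f$ rather than $B_f$ times the number of terms) or separately handle the at most two boundary indices $j$ where exactly one of $n-t_j$, $n-s_{j+1}$ is negative.
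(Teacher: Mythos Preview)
Your treatment of part~(1) is correct and matches the paper's.

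For parts~(2) and~(3), however, there is a genuine gap. You use the theorem's $\ell$ (an arbitrary integer $>\ell_F(k)$) as the splitting index in $F_k(n)=P_\ell(n)+(-1)^\ell F_{k+\ell}(n)$ and then try to control the tail $F_{k+\ell}(n)$. That tail, for generic $\ell$, still contains many large summands $g_j(n)\sim (2j+1)\cdot 2an$, and no amount of Abel summation on the bounded part $B(\cdot)$ will absorb the polynomial part. The paper avoids this entirely: it fixes $\ell$ to the \emph{unique} value $m-k$, where $m$ is determined by writing $n=m(3m-1)/2+h$ with $0\le h\le 3m$. With that choice, $n<s_{m+1}$, so by part~(1) the tail collapses to a single term, $F_m(n)=f(h-m)$, which is $0$ for $h<m$ and at most $a(2m)^2+2bm+|c|+B_f$ for $m\le h\le 3m$. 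Your final sentence (``separately handle the at most two boundary indices'') is exactly this idea, but it is the crux of the argument, not an optional refinement; in fact there is only one such index, $j=m$.

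Two further points. First, your pair-difference formula $g_j-g_{j+1}=18a(j+1)^2-4an+\cdots$ is correct but does not yield a termwise lower bound on $P_\ell$: for $n\approx \tfrac{3}{2}(k+\ell)^2$ these differences are negative for small $j$ and positive for large $j$. One must sum them exactly, which is equivalent to (but clumsier than) the paper's closed form
\[
\sum_{k\le j<m}(-1)^{j-k}\hat g_j(n)=k\bigl(b+a(2(n+1)-3k^2)\bigr)-(-1)^{m-k}m\bigl(b+a(2(n+1)-3m^2)\bigr),
\]
obtained by induction on $m$. Second, your worry that an $O(B_f\sqrt{n})$ error is ``much too large'' is unfounded: the paper's error \emph{is} $|E_k(m,h)|\le 2(m-k)B_f=2\ell B_f$, of exactly that order, and it still produces the stated constants $A_F,B_F$ because the main term $a(3k-2)m^2$ is of order $akn$ and dominates. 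The quadratic $P_F^k(\ell)$ in~\eqref{eqf0} arises after dividing $\widetilde F_k(k+\ell)$ by $a(3k-2)$; part~(2) then follows not from the decomposition at $\ell=1$ but from showing that the positive root $\ell_F(k)$ drops below $1$ precisely when $k(3k-2)>2A_F+B_F$.
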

\begin{remark}Under the above theorem, to determine the positivity of $F_k(n)$, it remains to verify the finite case of $1\le k\le k_F$ with
$$k(3k+1)/2+2k+1\le n<k(3k+1)/2+(3\ell_F(k)-1)(2k+\ell_F(k))/2.$$
For a given function $f$ defined by the above, it can be determined using scientific calculation tools such as {\bf Mathematica} and {\bf Mape}.
\end{remark}
We now give the proof of Theorem \ref{main}. From \eqref{eqF0}, we see that
\begin{equation}\label{eqF}
F_k(n)=\sum_{j\ge k}(-1)^{j-k}\left(f(n-j(3j+1)/2)-f(n-1-j(3j+5)/2)\right).
\end{equation}
Hence if $n<1+k(3k+5)/2=k(3k+1)/2+2k+1$, then $F_k(n)=f(n-k(3k+1)/2)$. In the following, we only consider the case for
$$n\ge k(3k+1)/2+2k+1.$$
Let $k, m\in\bz$ such that $m>k\ge 1$. Note that each integer $n\ge k(3k+1)/2+2k+1$ can be uniquely write as
$$n=\frac{1}{2}m(3m-1)+h,$$
for some integers $m,h$ such that $0\le h\le 3m$. We need the following proposition.
\begin{proposition}\label{lem21}Let $k, m, h\in\bz$ such that $m>k\ge 1$ and $0\le h\le 3m$. Let $f(\cdot)$ and $F_k(\cdot)$ be defined as in \eqref{eqf} and \eqref{eqF}, respectively. Then, we have
\begin{align*}
F_k\left(\frac{m(3m-1)}{2}+h\right)=&(2a+b)k-3a k^3+ak \big(3m^2+(2h-m)\big)\\
&-(-1)^{m-k}\big( (2a+b)m+am (2h-m)-f(h-m)\big)+E_k(m,h).
\end{align*}
where $|E_k(m,h)|\le 2(m-k)B_f$.
\end{proposition}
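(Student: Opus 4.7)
The plan is to decompose $F_k(n)$ by pinpointing which values of $j$ contribute nontrivially, and then evaluating the resulting alternating sum of a polynomial. Writing $a_j := n - j(3j+1)/2$ and $b_j := n - 1 - j(3j+5)/2$, so that $F_k(n) = \sum_{j \ge k}(-1)^{j-k}(f(a_j) - f(b_j))$, I would first verify that for $n = m(3m-1)/2 + h$ with $0 \le h \le 3m$, one has $a_j, b_j < 0$ for every $j \ge m+1$ and also $b_m < 0$, so those terms vanish by the convention $f(x) = 0$ for $x < 0$. Consequently the $j = m$ contribution is exactly $(-1)^{m-k} f(h - m)$ (which is $0$ when $h < m$), while for $k \le j \le m-1$ both arguments are nonnegative.

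For those indices the elementary identities $a_j - b_j = 2j + 1$ and $a_j + b_j = 2n - 1 - 3j(j+1)$ give
$$f(a_j) - f(b_j) = (2j+1)\bigl(2an - a + b - 3aj(j+1)\bigr) + \delta_j, \qquad |\delta_j| \le 2B_f,$$
with $\delta_j = B(a_j) - B(b_j)$. Summing the $\delta_j$ with alternating signs over the $m - k$ admissible values of $j$ produces the error term $E_k(m,h)$ of the claimed size $2(m-k)B_f$.

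The heart of the argument is to evaluate
$$M := \sum_{j=k}^{m-1}(-1)^{j-k} u_j, \qquad u_j := (2j+1)\bigl(2an - a + b - 3aj(j+1)\bigr),$$
an alternating sum of a cubic polynomial in $j$. The slickest method is a telescoping trick: look for a polynomial $P(j)$ satisfying $P(j) + P(j+1) = u_j$, so that the alternating sum collapses to $M = P(k) - (-1)^{m-k} P(m)$. Equating coefficients of $1, j, j^2, j^3$ yields the unique solution $P(j) = -3aj^3 + (2an + 2a + b)\, j$. Substituting $n = m(3m-1)/2 + h$ and simplifying $P(k)$ and $(-1)^{m-k} P(m)$ reproduces precisely the two polynomial pieces of the proposition; recombining with the boundary term $(-1)^{m-k} f(h-m)$ and the error $E_k(m,h)$ finishes the proof. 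The main obstacle is really just spotting the telescoping ansatz $P(j)+P(j+1)=u_j$ — everything after that is routine coefficient comparison and algebraic rearrangement.
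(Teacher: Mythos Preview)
Your proof is correct and follows essentially the same route as the paper: you identify the same range $k\le j\le m-1$ of nontrivial terms, isolate the same boundary contribution $(-1)^{m-k}f(h-m)$, and bound the error from the $B$-terms identically. The only difference is cosmetic: where the paper verifies the closed form $k(b+a(2(n+1)-3k^2))-(-1)^{m-k}m(b+a(2(n+1)-3m^2))$ ``by induction,'' you derive it via the telescoping ansatz $P(j)+P(j+1)=u_j$ with $P(j)=-3aj^3+(2an+2a+b)j$, which is precisely the same polynomial and arguably the cleaner way to discover it.
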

\begin{proof}Notice that $0\le h\le 3m$.  If $j\ge m+1$ then we have
\begin{align*}
\left(\frac{m(3m-1)}{2}+h\right)-\frac{j(3j+1)}{2}&\le \frac{m(3m-1)}{2}+h-\frac{(m+1)(3m+4)}{2}\nonumber\\
&=h-2-4m<0;
\end{align*}
if $j\ge m$ then we have
\begin{align*}
\left(\frac{m(3m-1)}{2}+h\right)-1-\frac{j(3j+5)}{2}&\le \frac{m(3m-1)}{2}+h-1-\frac{m(3m+5)}{2}\nonumber\\
&=h-1-3m<0;
\end{align*}
if $0\le j\le m-1$ then we have
\begin{align*}
\left(\frac{m(3m-1)}{2}+h\right)-1-\frac{j(3j+5)}{2}&\ge \frac{m(3m-1)}{2}+h-1-\frac{(m-1)(3m+2)}{2}\nonumber\\
&=h\ge 0.
\end{align*}
Therefore, with $n={m(3m-1)}/{2}+h$, by noting that $f(j)=0$ for $j<0$ and using \eqref{eqF} implies
\begin{align*}
F_k\left(n\right)=&\sum_{k\le j<m}(-1)^{j-k}\left(f\left(n-\frac{j(3j+1)}{2}\right)-f\left(n-1-\frac{j(3j+5)}{2}\right)\right)\\
&+(-1)^{m-k}f\left(\frac{m(3m-1)}{2}+h-\frac{m(3m+1)}{2}\right).
\end{align*}
Denoting by $\hat{f}(x)=ax^2+bx$, then by \eqref{eqf} and the above we have
\begin{align*}
F_k(n)=&\sum_{k\le j< m}(-1)^{j-k}\left(\hat{f}\left(n-\frac{j(3j+1)}{2}\right)-\hat{f}\left(n-1-\frac{j(3j+5)}{2}\right)\right)\\
&+\sum_{k\le j< m}(-1)^{j-k}\left(B\left(n-\frac{j(3j+1)}{2}\right)-B\left(n-1-\frac{j(3j+5)}{2}\right)\right)+(-1)^{m-k}f(h-m)\\
=&\sum_{k\le j< m}(-1)^{j-k}\left(\hat{f}\left(n-\frac{j(3j+1)}{2}\right)-\hat{f}\left(n-1-\frac{j(3j+5)}{2}\right)\right)\\
&+(-1)^{m-k}f(h-m)+E_k(m,h),
\end{align*}
where $|E_k(m,h)|\le 2(m-k)B_f$. By using induction, one can verify that
\begin{align*}
&\sum_{k\le j< m}(-1)^{j-k}\left(\hat{f}\left(n-\frac{j(3j+1)}{2}\right)-\hat{f}\left(n-1-\frac{j(3j+5)}{2}\right)\right)\\
=&k(b+a(2(n+1)-3k^2))-(-1)^{m-k}m(b+a(2(n+1)-3m^2)).
\end{align*}
Therefore, by substituting $n=m(3m-1)/2+h$ into the above, we have
\begin{align*}
F_k(n)=&(2a+b)k-3ak^3+ak \big(3m^2+(2h-m)\big)\\
&-(-1)^{m-k}\big( (2a+b)m+am (2h-m)-f(h-m)\big)+E_k(m,h).
\end{align*}
This completes the proof of the proposition.
\end{proof}

\begin{proof}[Completing the proof of Theorem \ref{main}]
Using Proposition \ref{lem21}, we have the following statements. For $0\le h<m$, we have $f(h-m)=0$ and
\begin{align*}
F_k(n)=&(2a+b)k-3ak^3+ak \big(3m^2+(2h-m)\big)\\
&-(-1)^{m-k}\big( (2a+b)m+am (2h-m)\big)+E_k(m,h)\\
\ge &a(3k-1)m^2-3ak^3-\big(ak+(2a+b)+2B_f\big)m+\big((2a+b)+2B_f\big)k,
\end{align*}
Here we have used the facts that $|2h-m|\le m$ and $|E_k(m,h)|\le 2(m-k)B_f$. For $m\le h\le 3m$, we have
$$f(h-m)=a(h-m)^2+b(h-m)+c+B(h-m),$$
hence
$$(2a+b)m+am (2h-m)-f(h-m)= 2am+b(2m-h)+a\big(2m^2-(2m-h)^2\big)-c-B(h-m),$$
which implies
\begin{align*}
F_k(n)=&(2a+b)k-3ak^3+ak \big(3m^2+(2h-m)\big)\\
&-(-1)^{m-k}\big( 2am+b(2m-h)+a\big(2m^2-(2m-h)^2\big)-c-B(h-m)\big)+E_k(m,h)\\
\ge &a(3k-2)m^2-3ak^3-\big(ak+(2a+b)+2B_f\big)m+\big((2a+b)+2B_f\big)k-|c|-B_f.
\end{align*}
Here we have used the facts that $|2m^2-(2m-h)^2|\le 2m^2$, $|2m-h|\le m$, $|B(h-m)|\le B_f$ and $|E_k(m,h)|\le 2(m-k)B_f$.

\medskip

Combining the above two inequalities for $F_k(n)$, with
\begin{align*}
 \widetilde{F}_k(m)=a(3k-2)m^2-3ak^3-\big(ak+(2a+b)+2B_f\big)m+\big((2a+b)+2B_f\big)k-|c|-B_f,
\end{align*}
we have $F_k(n)\ge \widetilde{F}_k(m)$ for all $0\le h\le 3m$. Since $m>k$, we can write $m=k+\ell$ with $\ell\ge 1$. Hence we have
\begin{align}\label{eqf0}
\frac{\widetilde{F}_k(k+\ell)}{a(3k-2)}&=\ell^2+\left(2k-1/3-\frac{b+2B_f+8a/3}{a(3k-2)}\right)\ell-(k+{2}/{3})-\frac{|c|+B_f+4a/3}{a(3k-2)}\nonumber\\
&=\ell^2+2\left(k-1/6-\frac{A_F}{3k-2}\right)\ell-(k+{2}/{3})-\frac{B_F}{3k-2}=:P_F^k(\ell),
\end{align}
by recalling that (see \eqref{eqabkf})
$$A_F=\frac{B+4a/3+b/2}{a}\;\;\text{and}\;\; B_F=\frac{B+4 a/3+|c|}{a}.$$
Solving the quadratic inequality $P_F^k(\ell)>0$ in variable $\ell\ge 1$, we have
\begin{align*}
\ell>\left({1}/{6}+\frac{A_F}{3k-2}-k\right)+\sqrt{\left({1}/{6}+\frac{A_F}{3k-2}-k\right)^2+\left(k+{2}/{3}+\frac{B_F}{3k-2}\right)}=\ell_F(k),
\end{align*}
(see \eqref{eqlfk}). In particular, if $\ell_F(k)<1$ then we have $F_k(n)>0$ for all $n\ge k(3k+1)/2+2k+1$. Note that $\ell_F(k)<1$ if and only if
$$\sqrt{\left(k-1/6-\frac{A_F}{3k-2}\right)^2+\left(k+{2}/{3}+\frac{B_F}{3k-2}\right)}<\left(k+5/6-\frac{A_F}{3k-2}\right).
$$
By suitable simplification and transformation, we find that if $ k(3k-2)>(2A_F+B_F)$, that is
$$k>\frac{1}{3}\left(1+\sqrt{1+3(2A_F+B_F)}\right)=k_F,$$
(see \eqref{eqabkf}), then $\ell_F(k)<1$. This completes the proof of Theorem \ref{main}.
\end{proof}
\subsection{The applications of the fundamental result}
In this subsection, we give the proofs of Theorems \ref{mth1} and \ref{mth2}.
\begin{proof}[Proof Theorems \ref{mth1} and \ref{mth2}]
In order to better use Proposition \ref{prom} and Theorem \ref{main} to derive a smaller lower bound for $\ell_F(k)$ and $k_F$, we define
$$C_{\alpha,\beta,\gamma}=\frac{\max_{n\ge 0}P_{\alpha,\beta,\gamma}(n)+\min_{n\ge 0}P_{\alpha,\beta,\gamma}(n)}{2},$$
and write
$$R_{\alpha,\beta,\gamma}(n)=\frac{n^2+(\alpha+\beta+\gamma)n}{2\alpha\beta\gamma}+C_{\alpha,\beta,\gamma}+B_{\alpha,\beta,\gamma}(n),$$
with $B_{\alpha,\beta,\gamma}(n)=P_{\alpha,\beta,\gamma}(n)-C_{\alpha,\beta,\gamma}$. Then, we have
$$B_{\alpha,\beta,\gamma}=\max_{n\ge 0}|B_{\alpha,\beta,\gamma}(n)|=\frac{\max_{n\ge 0}P_{\alpha,\beta,\gamma}(n)-\min_{n\ge 0}P_{\alpha,\beta,\gamma}(n)}{2}.$$
Denoting by $K_{\alpha,\beta,\gamma}=\lfloor k_F\rfloor$ and $L_{\alpha,\beta,\gamma}(k)=\lfloor \ell_F(k)\rfloor$.
To complete the proof of Theorems \ref{mth1} and \ref{mth2}, we created Table \ref{t1} for the above constants, with the assistance of {\bf Mathematica} (the code can be find in the Appendix).
\begin{table}[ht]
\resizebox{0.9\textwidth}{!}{
\begin{tabular}{c|c|c|c|c|c|c|c|c}
  \hline
   $(\alpha,\beta,\gamma)$~  & (1,2,3)& (1,2,5)& (1,2,7) & (1,3,4) & (1,3,5) & (1,4,9) & (2,3,5) & (2,3,7) \\
   \hline
  $C_{\alpha,\beta,\gamma}$~ &  17/24 &  27/40  & 41/56 &7/12   &  19/30  &  13/24  &  49/120 &  71/168 \\
  \hline
  $B_{\alpha,\beta,\gamma}$~ &  7/24  &  13/40  & 23/56 &5/12   &  11/30  &   7/12  &  71/120 &  97/168 \\
  \hline
    $K_{\alpha,\beta,\gamma}$~ &   3    &    4  &  5  &4    &    5    &     8   &     7   &     8   \\
 \hline
$L_{\alpha,\beta,\gamma}(1)$ &  15    &  23   & 35  & 29   &     33   &   99   &   82  &    110   \\
  \hline
$L_{\alpha,\beta,\gamma}(2)$ &   2  &   4   &  6 &5   &   6           &    22  &     18  &    25  \\
  \hline
$L_{\alpha,\beta,\gamma}(3)$ &   1  &   1   &  2 & 2   &   2           &   10  &   7  &    11  \\
  \hline
$L_{\alpha,\beta,\gamma}(4)$ &   0  &  1   &  1 & 1  &   1           &    4  &   3   &     5  \\
  \hline
$L_{\alpha,\beta,\gamma}(5)$ &   0  &   0   & 1  &0   &   1     &    2  &   1   &     3  \\
  \hline
$L_{\alpha,\beta,\gamma}(6)$ &   0  &    0  &  0 & 0  &   0   &    1  &   1  &     1 \\
  \hline
$L_{\alpha,\beta,\gamma}(7)$ &   0  &    0  &  0 & 0  &   0   &    1  &    1 &     1  \\
  \hline
$L_{\alpha,\beta,\gamma}(8)$ &   0  &    0  &  0  &0  &   0   &    1   &   0   &    1   \\
\hline
\end{tabular}
}
\captionof{table}{Data for the triples $(\alpha,\beta, \gamma)$ in Theorems \ref{mth1} and \ref{mth2}}\label{t1}
\end{table}

\medskip

Using Theorem \ref{main}, we obtain the following Proposition.
\begin{proposition}\label{promm}
Let $k$ and $n$ be positive integers, and let the triples $(\alpha,\beta, \gamma)$, $K_{\alpha,\beta,\gamma}$ and $L_{\alpha,\beta,\gamma}(k)$ be given as in Table \ref{t1}.
\begin{enumerate}
  \item For all $n<k(3k+1)/2+2k+1$, we have
${\rm TP}_{\alpha,\beta,\gamma}^{k}(n)=R_{\alpha,\beta,\gamma}(n-k(3k+1)/2)$.
  \item For all $n\ge k(3k+1)/2+2k+1$ with $k>  K_{\alpha,\beta,\gamma}$, we have ${\rm TP}_{\alpha,\beta,\gamma}^{k}(n)>0$.
  \item For all $n\ge k(3k+1)/2+(3\ell-1)(2k+\ell)/2$ with $1\le k\le  K_{\alpha,\beta,\gamma}$ and any $\ell> L_{\alpha,\beta,\gamma}(k)$, we have ${\rm TP}_{\alpha,\beta,\gamma}^{k}(n)>0$.
\end{enumerate}
\end{proposition}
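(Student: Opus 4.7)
The plan is to recognise the proposition as a direct specialisation of Theorem~\ref{main} to the function $f = R_{\alpha,\beta,\gamma}$, after which the three conclusions are read off from Theorem~\ref{main}(1)--(3) with the numerical thresholds collected in Table~\ref{t1}. First I would invoke Proposition~\ref{prom} to decompose $R_{\alpha,\beta,\gamma}(n) = \frac{n^2+(\alpha+\beta+\gamma)n}{2\alpha\beta\gamma} + P_{\alpha,\beta,\gamma}(n)$, and then re-centre the periodic remainder by setting $C_{\alpha,\beta,\gamma} = (\max P_{\alpha,\beta,\gamma} + \min P_{\alpha,\beta,\gamma})/2$ and $B_{\alpha,\beta,\gamma}(n) = P_{\alpha,\beta,\gamma}(n) - C_{\alpha,\beta,\gamma}$, so that $|B_{\alpha,\beta,\gamma}(n)| \le B_{\alpha,\beta,\gamma} := (\max P_{\alpha,\beta,\gamma} - \min P_{\alpha,\beta,\gamma})/2$. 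This puts $R_{\alpha,\beta,\gamma}$ into the exact shape of \eqref{eqf}, with
$$a = \tfrac{1}{2\alpha\beta\gamma}, \quad b = \tfrac{\alpha+\beta+\gamma}{2\alpha\beta\gamma}, \quad c = C_{\alpha,\beta,\gamma}, \quad B_f = B_{\alpha,\beta,\gamma},$$
and with the convention $R_{\alpha,\beta,\gamma}(n) = 0$ for $n < 0$ that \eqref{eqf} requires.

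Next I would match the generating functions \eqref{eqm0} and \eqref{eqF0}. Since $(-1)^{k}(-1)^{j} = (-1)^{j-k}$, we have
$$G_{\alpha,\beta,\gamma}^{k}(q) = \Bigl(\sum_{n\ge 0} R_{\alpha,\beta,\gamma}(n)\,q^n\Bigr)\sum_{j\ge k}(-1)^{j-k} q^{j(3j+1)/2}(1-q^{2j+1}),$$
so ${\rm TP}_{\alpha,\beta,\gamma}^{k}(n) = F_k(n)$ with the choice $f = R_{\alpha,\beta,\gamma}$ described above. Part~(1) of the proposition is then literally part~(1) of Theorem~\ref{main}. For parts~(2) and~(3), since $K_{\alpha,\beta,\gamma} = \lfloor k_F \rfloor$ and $L_{\alpha,\beta,\gamma}(k) = \lfloor \ell_F(k) \rfloor$, the hypothesis $k > K_{\alpha,\beta,\gamma}$ forces $k > k_F$, while the hypothesis $\ell > L_{\alpha,\beta,\gamma}(k)$ (together with $1 \le k \le K_{\alpha,\beta,\gamma}$) forces $\ell > \ell_F(k)$; the corresponding parts of Theorem~\ref{main} then give the positivity claims.

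The only step that actually demands work is the tabulation itself. For each of the eight triples $(\alpha,\beta,\gamma)$ listed, one evaluates $P_{\alpha,\beta,\gamma}(n) = R_{\alpha,\beta,\gamma}(n) - \frac{n^2+(\alpha+\beta+\gamma)n}{2\alpha\beta\gamma}$ over a single period $0 \le n < \alpha\beta\gamma$ to extract $C_{\alpha,\beta,\gamma}$ and $B_{\alpha,\beta,\gamma}$, substitutes the resulting $(a,b,c,B_f)$ into \eqref{eqabkf} and \eqref{eqlfk} to obtain $k_F$ and $\ell_F(k)$, and takes floors. This is a finite and purely numerical computation; the expected obstacle is not conceptual but bookkeeping, and it is dispatched by the Mathematica code placed in the Appendix. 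No structural result beyond Theorem~\ref{main} and Proposition~\ref{prom} is required.
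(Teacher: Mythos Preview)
Your proposal is correct and follows essentially the same route as the paper: identify $R_{\alpha,\beta,\gamma}$ with the template \eqref{eqf} via Proposition~\ref{prom} after the re-centering of the periodic part, match the generating functions so that ${\rm TP}_{\alpha,\beta,\gamma}^{k}(n)=F_k(n)$, and then read parts (1)--(3) directly from Theorem~\ref{main}, with the integer thresholds $K_{\alpha,\beta,\gamma}=\lfloor k_F\rfloor$ and $L_{\alpha,\beta,\gamma}(k)=\lfloor\ell_F(k)\rfloor$ supplied by the Mathematica computation behind Table~\ref{t1}. This is exactly how the paper presents it, with the proposition introduced by ``Using Theorem~\ref{main}, we obtain the following Proposition.''
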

Under Proposition \ref{promm}, to determine the positivity of ${\rm TP}_{\alpha,\beta,\gamma}^{k}(n)$, we need to verify the finite cases of $1\le k\le K_{\alpha,\beta,\gamma}$ with
$$k(3k+1)/2+2k+1\le n<k(3k+1)/2+(3 L_{\alpha,\beta,\gamma}(k)-1)(2k+L_{\alpha,\beta,\gamma}(k))/2,$$
and the cases of $k\ge 1$ with $k(3k+1)/2\le n<k(3k+1)/2+2k+1$. The first one can be carried out using {\bf Mathematica}, and for the latter one, it follows from the positivity of $R_{\alpha,\beta,\gamma}(n)$ since ${\rm TP}_{\alpha,\beta,\gamma}^{k}(n)=R_{\alpha,\beta,\gamma}(n-k(3k+1)/2)$, which can also be done using {\bf Mathematica}. This completes the proofs of Theorems \ref{mth1} and \ref{mth2}.
\end{proof}

\section{Final remarks}
We believe that our methods used to study the positivity of the tail of the pentagonal number series can also solve other conjectures involving theta series of Merca (see \cite{MR4309312, MR4469674,MR4458135} for examples).

Our approaches for Theorems \ref{mth1} and \ref{mth2} are elementary, and with the aid of {\bf Mathematica}. It would be very appealing to have a combinatorial interpretation of Theorems \ref{mth1} and \ref{mth2}. More importantly, it would be very interesting to find $q$-series identities like \eqref{AM-id} for Theorems \ref{th2} and \ref{th1}, for which one can provide direct proofs.

\medskip

In their study on averaged truncations of Jacobi's triple product identity,
Andrews and Merca~\cite[Question~(2)]{MR2946378} and Guo and
Zeng~\cite[Conjecture~6.1]{MR3007145} posed the following conjecture:
\begin{conjecture}\label{AMconj}
For positive integers $m,k,R,S$ with $1\le S<R/2$, the coefficient of $q^m$ in
\begin{equation*}
\frac{(-1)^{k-1}}{(q^S,q^{R-S},q^{R};q^{R})_{\infty}}
\sum_{0\le n<k}(-1)^nq^{\binom{n+1}{2}R-nS}(1-q^{(2n+1)S})
\end{equation*}
is non-negative.
\end{conjecture}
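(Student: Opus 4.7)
The plan is to adapt the framework of Section~\ref{sec3} to the Jacobi triple product setting, reducing the conjecture to a telescoping estimate along the lines of Theorem~\ref{main}. First, one may assume $\gcd(R, S) = 1$, since otherwise every relevant series involves only powers of $q^{\gcd(R,S)}$, and the substitution $q \mapsto q^{\gcd(R,S)}$ reduces to the coprime case. Applying the Jacobi triple product identity
\begin{equation*}
(q^S, q^{R-S}, q^R; q^R)_\infty = \sum_{n \ge 0}(-1)^n q^{R\binom{n+1}{2} - Sn}\bigl(1 - q^{(2n+1)S}\bigr),
\end{equation*}
the conjecture becomes equivalent, for $m \ge 1$, to the non-negativity of the coefficient of $q^m$ in
\begin{equation*}
H^{R,S}_k(q) := \frac{(-1)^k}{(q^S, q^{R-S}, q^R; q^R)_\infty} \sum_{n \ge k}(-1)^n q^{R\binom{n+1}{2} - Sn}\bigl(1 - q^{(2n+1)S}\bigr).
\end{equation*}
Since $\gcd(R, S) = 1$ and $S < R/2$, the triple $(\alpha, \beta, \gamma) = (S, R-S, R)$ is pairwise coprime and distinct, so $H^{R,S}_k(q)$ factors as a Jacobi-exponent analog of $G^k_{\alpha, \beta, \gamma}(q)$ from \eqref{eqm0} multiplied by an infinite product with manifestly non-negative coefficients. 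It therefore suffices to prove eventual positivity for this analog.

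The key step is to establish a Jacobi-exponent analog of Theorem~\ref{main}. Setting $e_j^- := R\binom{j+1}{2} - Sj$ and $e_j^+ := e_j^- + (2j+1)S$, the intervals $[e_j^-, e_{j+1}^-)$ are disjoint because
\begin{equation*}
e_{j+1}^- - e_j^+ = (j+1)(R - 2S) > 0,
\end{equation*}
which is the structural fact corresponding to the interval ordering used in Proposition~\ref{lem21}. Combined with Proposition~\ref{prom} applied to $R_{S, R-S, R}(n) = an^2 + bn + c + B(n)$ with $a = 1/(2SR(R-S))$, the argument of Proposition~\ref{lem21} generalizes: for $n$ in the $m$-th interval, the analog $F^{R,S}_k(n)$ reduces via a telescoping closed form (verified by induction on $m$ in the Jacobi setting) to a main expression of shape $S \cdot k \cdot \bigl(b + a(2n - Rk(k+1) - S)\bigr)$ plus a parity-dependent $m$-term and a bounded error of size $O((m-k) B_f)$. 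Substituting $n = e_m^- + h$ shows that the dominant contribution is quadratic in $m$ with positive coefficient of order $k/(R-S)$, whereas the parity term has magnitude bounded by $O(Sm^2/(R(R-S)))$, which is smaller by a factor at most $2S/(Rk) < 1/k$ for $k \ge 1$. This yields positivity for $m$ beyond an explicit threshold $\ell_F(k; R, S)$ and for $k$ beyond an explicit $k_F(R, S)$, both polynomial in $R$ and $S$.

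The main obstacle is handling the finitely many exceptional cases $1 \le k \le k_F(R, S)$ and $m - k \le \ell_F(k; R, S)$. In this paper, the corresponding exceptional region for each of the eight fixed triples in Table~\ref{t1} is dispatched by direct verification in \textbf{Mathematica}; for the general conjecture this is insufficient, since both the thresholds and the number of cases grow polynomially in $R$ and $S$. Two routes seem plausible. One is to sharpen the asymptotic analysis---tracking the sign of the $j = m$ boundary term, bounding $B_f$ via Frobenius-type estimates for the denumerant $R_{S, R-S, R}$, and exploiting parity considerations on $m - k$---in order to collapse the exceptional thresholds to absolute constants and treat the remaining bounded region by a uniform combinatorial argument (for instance, by producing an explicit partition-theoretic interpretation of $F^{R,S}_k$). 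The other is to derive a Bailey-pair-type identity generalizing the Andrews--Merca identity \eqref{AM-id} to arbitrary Jacobi products, furnishing a manifestly non-negative expansion and bypassing case analysis entirely, at the cost of substantial new $q$-series machinery such as Watson's $_8\phi_7$ transformation or the Bailey lattice. I expect the main difficulty to lie in this uniform treatment of the exceptional region, since the routine extension of Theorem~\ref{main} addresses only the asymptotic tail.
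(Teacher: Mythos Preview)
The paper does not contain its own proof of Conjecture~\ref{AMconj}. It is stated there only as a known result, with the proofs attributed to Mao~\cite{MR3280682}, Yee~\cite{MR3280681}, He--Ji--Zang~\cite{MR3398853}, Wang--Yee~\cite{MR3937794}, and Schlosser--Zhou~\cite{SZ2023}. Those arguments work by producing an explicit $q$-series identity or a combinatorial interpretation in which the nonnegativity is manifest for \emph{every} coefficient simultaneously; none of them proceeds by the asymptotic coefficient analysis of Section~\ref{sec3}. So there is no ``paper's proof'' to match, and your proposal is an attempt to push the paper's pentagonal framework into territory the paper itself explicitly does not cover.

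As a research plan the outline is reasonable up to the point you yourself flag, but as a proof it has a genuine gap that is not merely cosmetic. The machinery of Theorem~\ref{main} succeeds in the paper only because the triple $(\alpha,\beta,\gamma)$ is fixed, so that $k_F$ and $\ell_F(k)$ are absolute constants and the residual region is literally finite and can be fed to \textbf{Mathematica}. In your setting the triple is $(S,R-S,R)$ with $R,S$ varying, and the oscillation bound $B_f$ for the denumerant $R_{S,R-S,R}(n)$ grows with $R$ and $S$ (roughly like $R$), so $k_F(R,S)$ and $\ell_F(k;R,S)$ genuinely diverge. The ``finitely many exceptional cases'' are therefore infinitely many once one quantifies over $R,S$, and neither of your two suggested routes---sharpening the error terms to collapse the thresholds, or finding a Bailey-type identity---is carried out or even sketched beyond a wish list. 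In particular, the first route is unlikely to succeed as stated: the parity term in the Jacobi analog of Proposition~\ref{lem21} is of size comparable to $aSm^2$ against a main term of size $akRm^2$, and for $k=1$ the margin $1-2S/R$ can be made arbitrarily small by taking $S$ close to $R/2$, so one cannot expect a uniform absolute bound on $\ell_F(1;R,S)$ without an additional idea. The second route is essentially what the cited proofs already do, and would not use Section~\ref{sec3} at all.
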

In 2015, Mao~\cite{MR3280682} and
Yee~\cite{MR3280681} independently proved Conjecture~\ref{AMconj}
by different means; Mao's proof is algebraic, while Yee's proof is
combinatorial. The conjecture was later also confirmed by
He, Ji, and Zang~\cite{MR3398853} using combinatorial arguments. More recently, Wang and Yee~\cite[Theorem~2.3]{MR3937794}, as well as Schlosser and the author~\cite[Theorem~1.1]{SZ2023}, confirmed Conjecture~\ref{AMconj} by establishing different $q$-series identities. More results on averaged truncations of theta series can be found in the
papers by Wang and Yee~\cite{MR4064775}, by Xia, Yee and Zhao~\cite{MR4338939},
by Xia~\cite{MR4388462} and by Yao~\cite{MR4497425}.

\medskip

In~\cite{MR4309312}, Merca proposes the following conjecture on truncations of Jacobi's triple product identity, which refines the above Conjecture~\ref{AMconj}.
\begin{conjecture}\label{conjt1}For $1\le S<R/2$ and $k\ge 1$, the theta series
\begin{equation}
\frac{(-1)^k}{(q^S,q^{R-S};q^R)_\infty}\sum_{j\ge k}(-1)^jq^{j(j+1)R/2-jS}(1-q^{(2j+1)S})
\end{equation}
has non-negative coefficients.
\end{conjecture}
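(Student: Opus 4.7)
The plan is to mirror the strategy used to prove Theorem~\ref{th1}, interpreting the Jacobi triple product as a ``generalized pentagonal'' identity. First, set $e_j := Rj(j-1)/2 + jS$, so that $e_{-j} = Rj(j+1)/2 - jS$ and $e_{j+1} = Rj(j+1)/2 + (j+1)S$. A short bookkeeping using Jacobi's triple product identity $(q^S, q^{R-S}, q^R; q^R)_\infty = \sum_{m \in \bz} (-1)^m q^{e_m}$ gives
$$
\sum_{j \ge k}(-1)^j q^{Rj(j+1)/2 - jS}\bigl(1 - q^{(2j+1)S}\bigr) = (q^S, q^{R-S}, q^R; q^R)_\infty - \sum_{-k < m \le k}(-1)^m q^{e_m},
$$
and dividing by $(q^S, q^{R-S}; q^R)_\infty$ recasts the series in Conjecture~\ref{conjt1} as
$$
(-1)^{k-1}(q^R;q^R)_\infty\left[\frac{1}{(q^S, q^{R-S}, q^R; q^R)_\infty}\sum_{-k < m \le k}(-1)^m q^{e_m} - 1\right],
$$
which is the exact Jacobi analogue of the expression $B^k_d(q)$ treated in the proof of Theorem~\ref{th1}; indeed, the pentagonal case $R=3$, $S=1$ is recovered there.

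The second step, as in equation~\eqref{eq100}, is to extract from the denominator three pairwise coprime parts $\alpha, \beta, \gamma$ drawn from the multiset $\{S + jR : j \ge 0\} \cup \{R-S + jR : j \ge 0\}$. Up to the reduction $q \mapsto q^{1/\gcd(S,R)}$ one may assume $\gcd(S, R) = 1$, and such triples then exist for all admissible $(R, S)$ (for instance, $(S,\,R-S,\,R+S)$ is pairwise coprime whenever $S$ is odd; otherwise one takes from the next layer of parts). Factoring
$$
\frac{1}{(q^S, q^{R-S}; q^R)_\infty} = \frac{1}{(1-q^\alpha)(1-q^\beta)(1-q^\gamma)} \cdot H(q),
$$
with $H(q)$ a product of geometric series and therefore having non-negative coefficients, it suffices to prove positivity of the coefficients of
$$
G_{\alpha,\beta,\gamma;R,S}^k(q) := \frac{(-1)^k}{(1-q^\alpha)(1-q^\beta)(1-q^\gamma)}\sum_{j \ge k}(-1)^j q^{Rj(j+1)/2 - jS}\bigl(1 - q^{(2j+1)S}\bigr),
$$
the direct generalization of $G^k_{\alpha,\beta,\gamma}(q)$ from~\eqref{eqm0}.

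The third step is to carry out the analysis of Section~\ref{sec3} with the pentagonal exponents $j(3j+1)/2$ replaced by $Rj(j+1)/2 - jS$. Proposition~\ref{prom} furnishes the polynomial-plus-periodic decomposition $f(n) = an^2 + bn + c + B(n)$ for the coefficients of $1/[(1-q^\alpha)(1-q^\beta)(1-q^\gamma)]$, so the telescoping computation of Proposition~\ref{lem21} carries over: writing $n = m(Rm - R + 2S)/2 + h$ and exploiting that $f(h - m) = 0$ for $h < m$, one obtains a closed-form expression for $F_k(n)$ whose main term is a quadratic in $m$ with leading coefficient of order $a(Rk - c_{R,S})$ for an explicit $c_{R,S}$, and whose error $E_k(m,h)$ is bounded linearly in $m - k$. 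Completing the square in $\ell := m - k$ produces effective thresholds $k_F(R,S)$ and $\ell_F(k; R, S)$ analogous to~\eqref{eqlfk}, beyond which $F_k(n) > 0$ holds automatically. The finitely many remaining small-$(k,n)$ cases are then settled by direct computation, exactly as in Table~\ref{t1} and the proofs of Theorems~\ref{mth1}--\ref{mth2}.

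The main obstacle I anticipate is controlling the exceptional cases for small $k$ uniformly in $(R, S)$. When $R$ is large and $S$ is close to $R/2$, the available parts $\alpha, \beta, \gamma$ can be comparatively large, swelling the list of exceptional $n$ requiring treatment; the combinatorial cleanup argument of Lemmas~\ref{lem11}--\ref{lem13}, which re-expresses problematic exponents as sums of parts absorbed into $H(q)$, must then be engineered to depend on $(R, S)$ without devolving into an infinite family of ad hoc subcases. A cleaner resolution would be to establish a Jacobi analogue of the Andrews--Merca identity~\eqref{AM-id}, yielding an explicit manifestly non-negative expansion of the series in Conjecture~\ref{conjt1}; as the author remarks in the closing section, such an identity would be of independent combinatorial interest but none appears to be known.
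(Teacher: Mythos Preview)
The statement is a \emph{conjecture} in the paper, not a theorem: the paper does not prove it. It is reproduced from Merca's work and discussed only in the final-remarks section, where the author notes that Conjecture~\ref{conjt20} (which refines Conjecture~\ref{conjt1}) is expected to yield to the paper's methods in future work. There is therefore no proof in the paper to compare against.

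Your outline is a faithful adaptation of the pentagonal machinery to the Jacobi setting, and through the third step it is sound: for any \emph{fixed} pair $(R,S)$ with $\gcd(R,S)=1$ one can select a pairwise coprime triple $(\alpha,\beta,\gamma)$ from the residues $\{S,R-S,R+S,2R-S,\dots\}$, rerun the telescoping computation of Proposition~\ref{lem21} with the exponents $Rj(j+1)/2-jS$ in place of $j(3j+1)/2$, and obtain effective thresholds $k_F(R,S)$ and $\ell_F(k;R,S)$ beyond which positivity is automatic. One small slip: your claim that $(S,R-S,R+S)$ is pairwise coprime whenever $S$ is odd fails when $R$ is also odd, since then $R-S$ and $R+S$ are both even; but suitable triples can always be found from deeper layers.

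The genuine gap is precisely the one you identify. The thresholds $k_F$ and $\ell_F(k)$, and hence the size of the residual finite verification, depend on $(\alpha,\beta,\gamma)$ and grow with $R$. Already in the pentagonal case the paper's cleanup (Lemmas~\ref{lem11}--\ref{lem13}) required ad hoc combinatorial arguments tailored to specific small triples; carrying this out uniformly over all $(R,S)$ is not a routine extension, and neither you nor the paper supplies a mechanism for it. As it stands your plan would prove the conjecture for any individually specified $(R,S)$, modulo a finite computer check whose extent depends on $(R,S)$, but it does not constitute a proof of Conjecture~\ref{conjt1} in full generality. That is consistent with the paper's own position: the author anticipates the methods will extend but does not carry this out.
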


Recall that a \emph{unimodal sequence} is a finite sequence
of real numbers that first increases and then decreases. In \cite{SZ2023}, Schlosser and the author posed the following conjecture.
\begin{conjecture}\label{conjsz}
For any $k\in\bn_0$, define $J_k^{\textsc{t}}(m,n)$ by
$$\sum_{n\ge 0}\sum_{m\in\bz}J_k^{\textsc{t}}(m,n)q^nz^m
=\frac{(-1)^k}{(z,q/z,q;q)_\infty}\sum_{0\le \ell\le k}(-1)^\ell
q^{\ell (\ell+1)/2}z^{-\ell}(1-z^{2\ell+1}),$$
Then, for any $k\in\bn_0$ and $n\in\bn$ the sequence
  $\big(J_k^{\textsc{t}}(m,n)\big)_{-n\le m\le n}$
%and $\big(J_k^{\textsc{g}}(m,n)\big)_{-n\le m\le n}$
is unimodal.
\end{conjecture}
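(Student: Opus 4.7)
The plan is to reduce Conjecture~\ref{conjsz} to a two-variable refinement of the Jacobi triple product truncation, and then to exploit a combinatorial interpretation of the resulting coefficients. As a first step, Jacobi's triple product identity gives $\sum_{\ell\ge 0}(-1)^\ell q^{\ell(\ell+1)/2}z^{-\ell}(1-z^{2\ell+1}) = (z,q/z,q;q)_\infty$, which lets us rewrite the defining equation as
$$\sum_{m,n}J_k^{\textsc{t}}(m,n)\,z^m q^n = (-1)^k - (-1)^k\,\frac{R_k(z,q)}{(z,q/z,q;q)_\infty}, \quad R_k(z,q) := \sum_{\ell>k}(-1)^\ell q^{\ell(\ell+1)/2}z^{-\ell}(1-z^{2\ell+1}).$$
Since $R_k$ has $q$-valuation $(k+1)(k+2)/2$, one immediately reads off that $J_k^{\textsc{t}}(m,n) = (-1)^k\delta_{(m,n),(0,0)}$ for $n<(k+1)(k+2)/2$, so the conjecture holds trivially there. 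Moreover, the leading-$q$ coefficient is $J_k^{\textsc{t}}\bigl(m,(k+1)(k+2)/2\bigr) = \mathbf{1}_{-(k+1)\le m\le k+1}$, a constant and therefore unimodal sequence, which serves as a base case.

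Next, I would seek a genuine two-variable analogue of the Andrews--Merca identity~\eqref{AM-id}, i.e., a $q$-positive expansion of the form
$$(-1)^{k-1}\frac{R_k(z,q)}{(z,q/z,q;q)_\infty} \;=\; \sum_{\nu\in\mathcal{I}_k}\frac{q^{A(\nu)}\,z^{M(\nu)}}{D(\nu)},$$
in which each summand has a transparent combinatorial meaning. The natural starting points are the one-variable truncation identities of Wang--Yee~\cite{MR3937794} and of Schlosser and the present author~\cite{SZ2023}, which already settle the positivity Conjecture~\ref{AMconj}; the aim is to refine those Bailey-pair or $q$-WZ arguments so that the $z$-exponent on the right-hand side is tracked term-by-term. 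This would also address the desideratum recorded in the Final Remarks of an identity in the style of~\eqref{AM-id} tailored to the truncated Jacobi triple product.

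Armed with such an identity, the third step is to deduce unimodality in $m$ of $J_k^{\textsc{t}}(\cdot,n)$. Because a sum of unimodal Laurent polynomials need not be unimodal, the cleanest route is combinatorial: interpret $J_k^{\textsc{t}}(m,n)$ as the number of certain partition pairs (or Andrews-style concave/convex compositions) of total size $n$ equipped with a rank statistic equal to $m$, and construct an explicit injection from the set at rank $m+1$ into the one at rank $m$ on the descending side of the mode. The diagonal symmetry $c(m,n)=c(-m,n+m)$ of the coefficients $c(m,n)$ of $1/(z,q/z,q;q)_\infty$, which follows from the invariance under $z\mapsto q/z$, should help locate the mode and guide the injection. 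An inductive alternative would exploit the telescoping recurrence
$$J_{k+1}^{\textsc{t}}(m,n)+J_k^{\textsc{t}}(m,n) = [z^m q^n]\,\frac{q^{(k+1)(k+2)/2}\bigl(z^{-(k+1)}-z^{k+2}\bigr)}{(z,q/z,q;q)_\infty},$$
to transfer unimodality from smaller $k$ to larger $k$, though the minus sign on the right makes this delicate.

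The main obstacle is the leap from positivity to unimodality: Conjecture~\ref{AMconj} is itself a highly nontrivial positivity statement that required several independent proofs, and the present conjecture is a strict refinement. Concretely, the hard part will be producing a positive two-variable expansion of $R_k/(z,q/z,q;q)_\infty$ whose $z$-dependence is transparent enough to localize the mode and to verify descent on each side of it; absent such an expansion, a purely bijective approach would need a natural partition-theoretic model for the averaged truncated Jacobi triple product, which, as the Final Remarks emphasize, is itself an open problem.
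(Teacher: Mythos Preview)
The paper does not prove this statement: Conjecture~\ref{conjsz} is presented in the Final Remarks section as an open problem, attributed to Schlosser and the author in~\cite{SZ2023}, with no proof or proof sketch offered. There is therefore nothing to compare your proposal against.

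Your write-up is itself not a proof but a research outline, and you are candid about this. The initial reductions are correct: the rewriting via Jacobi's triple product, the vanishing for $n<(k+1)(k+2)/2$, and the base case at $n=(k+1)(k+2)/2$ are all fine. But from that point on every step is programmatic rather than executed. You \emph{hope} for a two-variable Andrews--Merca-type identity whose $z$-dependence is transparent; you \emph{propose} to build an injection on a combinatorial model that has not yet been identified; and you note that the telescoping recurrence has a sign obstruction. None of these is carried out, and you yourself flag the central difficulty in your final paragraph: passing from positivity (already hard, cf.\ Conjecture~\ref{AMconj}) to unimodality is a genuine strengthening for which no mechanism is supplied. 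So the proposal has a real gap, namely the absence of any concrete identity or bijection that controls the $z$-profile of $J_k^{\textsc{t}}(\cdot,n)$; this is not a minor omission but the entire content of the conjecture. As the paper records, finding such identities even for the simpler Theorems~\ref{th2} and~\ref{th1} is itself left open.
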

Notice that
$$\sum_{n\ge 0}\sum_{m\in\bz}J_k^{\textsc{t}}(m,n)q^nz^m=(-1)^k-\frac{(-1)^k}{(z,q/z,q;q)_\infty}\sum_{\ell>k}(-1)^\ell
q^{\ell (\ell+1)/2}z^{-\ell}(1-z^{2\ell+1}),$$
by Jacobi's triple product identity. Motivated by Conjectures \ref{conjt1} and \ref{conjsz}, we propose the following conjecture involving two-variable theta series, which is supported by mathematical experiments.
\begin{conjecture}\label{conjt20}For any integers $k,d\ge 1$ and integer $n\ge 0$, we define that
$$\sum_{n\ge 0}\mathscr{L}_{k,n}^{d}(z)q^{n+\frac{k(k+1)}{2}}=\frac{(-1)^k}{(1-z)(qz,q/z;q)_d}\sum_{j\ge k}(-1)^{j}q^{\frac{j(j+1)}{2}}z^{-j}(1-z^{2j+1}).$$
Then the Laurent polynomials
$$\mathscr{L}_{k,n}^{d}(z)=\sum_{-n-k\le m\le n+k}J_{k,n}^d(m)z^m$$
has positive coefficients for any integers $k\ge 1$, $d\ge 2$ and $n\ge 0$. Furthermore, the sequences
  $\big(J_{k,n}^{d}(m)\big)_{-k-n\le m\le k+n}$ are unimodal for all integers $k>1$, $d>3$ and $n\ge 0$.
\end{conjecture}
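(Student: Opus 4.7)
The plan is to extend the paper's polynomial-main-term-plus-bounded-error framework to the Laurent-polynomial-valued setting. Writing $\ell=j-k$ and $(1-z^{2k+2\ell+1})/(1-z)=z^{-k-\ell}+z^{-k-\ell+1}+\cdots+z^{k+\ell}$, one rewrites
\[
\mathscr{L}_{k,n}^{d}(z)=\sum_{\ell\ge 0}(-1)^\ell \bigl[q^{n-\ell(2k+\ell+1)/2}\bigr]\,\bigl(z^{-k-\ell}+\cdots+z^{k+\ell}\bigr)\cdot\frac{1}{(qz,q/z;q)_d}
\]
as a finite alternating sum. Each factor $1/((1-q^iz)(1-q^i/z))$ expands, at level $q^n$, as the palindrome $z^{-n/i}+z^{-n/i+2}+\cdots+z^{n/i}$ (when $i\mid n$), which is symmetric and unimodal centered at $z^0$. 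Hence, by the classical theorem that a product of symmetric unimodal Laurent polynomials centered at $z^0$ is again symmetric and unimodal, every individual summand above is a symmetric and unimodal Laurent polynomial.

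For the positivity claim ($d\ge 2$), I would establish a bivariate analogue of Proposition \ref{prom} showing that the coefficient $[q^nz^m]\,1/(qz,q/z;q)_d$ is a polynomial in $n$ of degree $2d-1$ (for fixed $m$ in the admissible range $|m|\le n$) plus a periodic error bounded uniformly in $m$, and then apply the mechanism of Theorem \ref{main}: the alternating sum over $\ell$ yields a positive leading main term, the periodic error is controlled by the truncation length, and for $n$ above an explicit threshold depending on $k$ and $d$ the signed sum is positive. Small $(k,n,m)$ cases below the threshold would be verified by \textbf{Mathematica} on a case-by-case basis; the main subtlety here is keeping the threshold uniform in $m$ so that the verification remains finite.

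For the unimodality claim ($d\ge 3$), the decomposition above exhibits $\mathscr{L}_{k,n}^{d}(z)$ as an alternating sum of symmetric unimodal Laurent polynomials, but alternating sums do not generally preserve unimodality. I would seek an Andrews--Merca type identity, in the spirit of \eqref{AM-id} and of the bivariate identities of Schlosser and the author \cite{SZ2023}, that converts this alternating sum into a single non-negative sum of symmetric unimodal terms with common center $z^0$; Stanley's unimodality-of-products theorem would then conclude. The main obstacle is this bivariate identity: the hypothesis $d\ge 3$ and the failure of unimodality at $d=2$ strongly suggest that precisely one factor $1/((1-q^iz)(1-q^i/z))$ must be absorbed into a Bailey-pair or WZ-type transformation to supply the extra unimodal weight that the alternating sum would otherwise destroy, and making this absorption explicit is the principal technical step.
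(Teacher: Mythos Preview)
The statement you are attempting is not proved in the paper: it is Conjecture~\ref{conjt20}, proposed in the final section and explicitly deferred to future work (``we intend to address and solve in our future work''). There is therefore no paper proof to compare against, and what you have written is, as you yourself frame it, a plan rather than a proof.

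For the positivity part, your outline follows the spirit of Theorem~\ref{main}, but the clause ``keeping the threshold uniform in $m$'' is not a side subtlety---it is the entire difficulty. In the paper's one-variable setting the error term is bounded by a single constant $B_f$; here the quasi-polynomial for $[q^n z^m]\,1/(qz,q/z;q)_d$ has coefficients (not only the periodic part, but the leading term as well) that depend on $m$, and $m$ ranges over $[-n-k,n+k]$. When $|m|$ is near $n$ the polytope degenerates and the main term shrinks, so the inequality analogous to $F_k(n)\ge\widetilde F_k(m)$ in the proof of Theorem~\ref{main} cannot be obtained from a single uniform bound. You assert such uniformity without argument; until that is supplied, the finite-verification scheme does not close. (A minor slip: for fixed $m$ the degree in $n$ is $2d-2$, not $2d-1$---two linear constraints on $2d$ nonnegative variables.)

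For the unimodality part there is no proof at all. You correctly note that alternating sums of symmetric unimodal polynomials need not be unimodal, and then say you would ``seek'' a bivariate Andrews--Merca or Bailey-type identity rewriting the sum positively; you also concede that ``making this absorption explicit is the principal technical step.'' No such identity is exhibited, and the paper supplies none. So for this half of the conjecture your proposal identifies a possible strategy but contains no actual argument, and the conjecture remains open on both sides.
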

\begin{remark}We have the following remarks.
\begin{enumerate}
  \item We note that $J_{k,n}^{d}(m)$
is symmetric in $m$, that is $J_{k,n}^{d}(m)=J_{k,n}^{d}(-m)$
for all integers $m,n\ge 0$, $d\ge 1$ and $k\ge 1$. Moreover, $J_{k,n}^d(n+k)=1$.
  \item With $q$ and $z$ replaced by $q^R$ and $q^S$, respectively, we observe that the factor $$(1-z)(qz,q/z;q)_2=(q^S;q^R)_3(q^{R-S};q^R)_2$$ appears in the infinite $(q^S,q^{R-S};q^R)_\infty$.
Therefore, Conjecture \ref{conjt20} refines Conjecture \ref{conjt1}.
\end{enumerate}

\end{remark}

\medskip

Finally, we expect that the methods described in this paper are also applicable to our Conjecture \ref{conjt20}, which we intend to address and solve in our future work.

\section*{Appendix}
\begin{lstlisting}[language=Mathematica,caption={{\bf Matematica} code for Table \ref{t1}}]
Clear["Global`*"];
a0=alpha;b0=beta;c0=gamma;
Ta=Table[SeriesCoefficient[1/((1-q^(a0))(1-q^(b0))(1-q^(c0))),{q,0,n}]-(a n^2+b n),{n,1,a0 b0 c0}];
c=(Max[Ta]+Min[Ta])/2; Bf=(Max[Ta]-Min[Ta])/2;
a=1/(2 a0 b0 c0); b=(a0+b0+c0) a;
A=(Bf+4 a/3+b/2)/a; B=(Bf+4a/3+Abs[c])/a;
kF=IntegerPart[(1+Sqrt[1+3(2A+B)])/3];
lF=Table[IntegerPart[(1/6+A/(3k-2)-k)+Sqrt[(1/6+A/(3k-2)-k)^2+(k+2/3+B/(3k-2))]],{k,1,kF}];
{c,Bf, kF}
lF
\end{lstlisting}

%\bibliographystyle{plain}
%\bibliography{testtqs}

\end{document}